\providecommand{\U}[1]{\protect \rule{.1in}{.1in}}
\newtheorem{theorem}{Theorem}
\newtheorem{application}[theorem]{Application}
\newtheorem{corollary}[theorem]{Corollary}
\newtheorem{definition}[theorem]{Definition}
\newtheorem{lemma}[theorem]{Lemma}
\newtheorem{proposition}[theorem]{Proposition}
\newtheorem{remark}[theorem]{Remark}
\newenvironment{proof}[1][Proof]{\textbf{#1.} }{\  \rule{0.5em}{0.5em}}
\begin{document}

\title{On the geodesic flow on CAT(0) spaces}
\author{Charalampos Charitos, Ioannis Papadoperakis
\and and Georgios Tsapogas\\Agricultural University of Athens}
\maketitle

\begin{abstract}
Under certain assumptions on CAT(0) spaces, we show that the geodesic flow is
topologically mixing. In particular, the Bowen-Margulis' measure finiteness
assumption used in \cite{[Ric]} is removed. We also construct examples of
CAT(0) spaces which do not admit finite Bowen-Margulis measure. \newline%
\textit{{2010 Mathematics Subject Classification:} 37A25, 57M50}

\end{abstract}

\section{Introduction}

Topological transitivity and topological mixing of the geodesic flow are two
dynamical properties extensively studied for Riemannian manifolds. Anosov in
\cite{[Ano]} first proved topological transitivity of the geodesic flow for
compact manifolds of negative curvature. Eberlein in \cite{[Ebe2]} proved
topological mixing for a large class of manifolds. In particular, he
established topological mixing for complete finite volume manifolds of
negative curvature as well as for compact manifolds of non-positive curvature
not admitting isometric, totally geodesic embedding of $\mathbb{R}^{2}.$ The
latter is the class of the so called visibility manifolds (see \cite{[EON]}
and \cite{[Ebe2]}) and, in modern terminology, it can equivalently be
described as the class of compact $CAT(0)$ manifolds which are hyperbolic in
the sense of Gromov (see \cite[Ch. II, Th. 9.33]{[BrH]}). For certain classes
of quotients of $CAT(-1)$ spaces by discrete groups of isometries, topological
mixing was shown in \cite{[CT3]}. All the above results are along the lines of
Eberlein's approach where the following two properties of the universal
covering were essential:

\begin{description}
\item[(u)] uniqueness of geodesic lines joining two boundary points at
infinity and

\item[(c)] the distance of asymptotic geodesics tends, up to
re-parametrization, to zero. 
\end{description}

Recently R. Ricks (see \cite{[Ric]}) made a significant generalization by
proving mixing of the Bowen--Margulis measure under the geodesic flow on all
rank one CAT(0) spaces under the natural assumption that the Bowen-Margulis
measure (also constructed in \cite{[Ric]} for CAT(0) spaces) is finite. In
this work we extend the classical approach of Eberlein to show topological
mixing of the geodesic flow for a class of spaces $X$ which are quotients of a
CAT(0) space $\widetilde{X}$ by a non-elementary discrete group of isometries
$\Gamma$ such that $\partial \widetilde{X}$ is connected and equal to the limit
set $\Lambda \left(  \Gamma \right)  .$ We impose certain conditions on the
CAT(0) space $\widetilde{X}$ (see Standing Assumptions, after Definition
\ref{stas} below), but we allow the Bowen-Margulis measure to be infinite.
Observe that the action of $\Gamma$ is not assumed to be co-compact.

In \cite[Theorem 1.2]{[Dal]} finite volume $n$-dimensional manifolds ($n\geq
2$) of pinched negative curvature are constructed whose fundamental group is
convergent. The latter implies, in particular, that the Bowen-Margulis measure
is infinite. From these examples, one can easily construct CAT(0) spaces which
are hyperbolic in the sense of Gromov, not admitting finite Bowen--Margulis
measure and satisfy our Standing Assumptions.

\section{Definitions and Preliminaries\label{standing}}

Let $Y$ be a proper metric space.

\begin{definition}
A geodesic segment in $Y$ is an isometric map $h:[a,b]\rightarrow Y.$ If
$x=h(a)$ and $y=h(b)$ then a geodesic segment joining $x$ and $y$ will be
denoted by $[x,y]$ and its interior by $\left(  x,y\right)  .$ \newline Let
$I=[0,+\infty)$ or $I=(-\infty,+\infty).$ A geodesic line (resp. geodesic ray)
in $Y$ is a local isometric map $h:I\rightarrow Y$ where $I=\left(
-\infty,+\infty \right)  $ (resp. $I=\left[  0,+\infty \right)  $ ).\newline A
closed geodesic is a local isometric map $h:I\rightarrow Y$ which is a
periodic map.\newline A metric space is called geodesic if every two points
can be joined by a geodesic segment.\newline A geodesic metric space is called
geodesically complete if every geodesic segment extends to a geodesic line.
\end{definition}

\begin{definition}
We say that the metric space $Y\ $is a Hadamard space if $Y\ $is simply
connected, complete, geodesic and has curvature $\leq0.$
\end{definition}

We refer the reader to \cite{[Bal]} and \cite{[BrH]} for a systematic
treatment of Hadamard spaces.

Throughout this paper, $X$ will denote the quotient space $\widetilde
{X}/\Gamma$ where $\widetilde{X}$ is a Hadamard space and $\Gamma$ a
non-elementary discrete group acting freely by isometries on $\widetilde{X}.$
In Section \ref{tesduo} $X$ will, in addition, be a $2$-dimensional surface.
Denote by $p:\widetilde{X}\rightarrow X$ the covering projection. $\Gamma$ is
isomorphic to $\pi_{1}(X)$ and we will make no distinction between $\Gamma$
and $\pi_{1}(X).$

The visual boundary $\partial \widetilde{X}$ of $\widetilde{X}$ is defined by
means of geodesic rays (see \cite[Ch.2, \S 3 p.21]{[CDP]}). Recall that two
geodesic rays $g_{1},g_{2}$ (or geodesics) in $\widetilde{X}$ are called
\textit{asymptotic} if $d\bigl(g_{1}\left(  t\right)  ,g_{2}\left(  t\right)
\bigr)$ is bounded for all $t\in \mathbb{R}^{+}.$ Equivalently, if $g\left(
+\infty \right)  $ denotes the boundary point determined by the geodesic ray
$g|_{\left[  0,+\infty \right)  },$ two geodesic rays $g_{1},g_{2}$ (or
geodesics) in $\widetilde{X}$ are asymptotic if $g_{1}\left(  +\infty \right)
=g_{2}\left(  +\infty \right)  .$ Since $\widetilde{X}$ is a CAT(0) space,
geodesic lines and geodesic rays in $\widetilde{X}$ are global isometric maps.
Note also that geodesic segments with given endpoints are unique. This is just
the Hadamard Cartan Theorem for CAT(0) spaces (see \cite[Th. 4.5 Ch.I]%
{[Bal]}). Moreover, we have uniqueness of geodesic rays in the following
sense: for any $x\in$ $\widetilde{X},$ $\xi \in \partial \widetilde{X}$ there is
a unique geodesic ray $r:[0,\infty)\rightarrow \widetilde{X}\cup \partial
\widetilde{X}$ such that $r(0)=x,$ $r(\infty)=\xi$ (see \cite[Ch.II, Prop.
8.2]{[BrH]}). The corresponding result for geodesic lines is not true.
However, the following theorem holds (see \cite[Cor. 5.8.ii Ch.I]{[Bal]})

\begin{theorem}
[Flat Strip Theorem]If $f,g:\mathbb{R}\rightarrow \widetilde{X}$ are two
geodesics with $f\left(  \infty \right)  $ $=g\left(  \infty \right)  $ and
$f\left(  - \infty \right)  =g\left(  - \infty \right)  $ then $f$ and $g$ bound
a flat strip, that is, a convex region isometric to the convex hull of two
parallel lines in the flat plane.
\end{theorem}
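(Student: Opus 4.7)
The plan is to show first that the distance between $f$ and $g$ (along a suitable parametrization) is bounded on all of $\mathbb{R}$, then promote boundedness to constancy via convexity, and finally upgrade constancy of the distance to flatness of the convex hull using the CAT(0) comparison condition.

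First I would reparametrize $g$, using the $\Gamma$-action or simply an additive constant, so that $g(0)$ realizes the distance from $f(0)$ to $g(\mathbb{R})$; this is harmless and only fixes the origin. The key input is the classical fact (valid in any CAT(0) space and proved from the CAT(0) four-point inequality) that for any two geodesics $c_{1},c_{2}\colon \mathbb{R}\to \widetilde{S}$ the function $t\mapsto \widetilde{d}\!\left(c_{1}(t),c_{2}(t)\right)$ is convex. Applied to $f$ and $g$, this gives convexity of $\varphi(t):=\widetilde{d}\!\left(f(t),g(t)\right)$.

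Second, I would show $\varphi$ is bounded. Because $f(+\infty)=g(+\infty)$, the rays $f|_{[0,\infty)}$ and $g|_{[0,\infty)}$ are asymptotic in the CAT(0) sense, and a standard application of the CAT(0) inequality to the triangles with vertex $f(t),g(t)$ and tip going to $\xi:=f(+\infty)$ shows that $\widetilde{d}(f(t),g(t))$ stays bounded as $t\to+\infty$; the symmetric argument at $-\infty$ uses $f(-\infty)=g(-\infty)$. A convex function on $\mathbb{R}$ that is bounded on both tails is bounded, and a bounded convex function on $\mathbb{R}$ is constant. Hence $\varphi(t)\equiv D$ for some $D\geq 0$.

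Third, I would upgrade constancy of $\varphi$ to flatness. Fix $s<t$ in $\mathbb{R}$ and consider the geodesic quadrilateral $Q$ with vertices $f(s),f(t),g(t),g(s)$. By CAT(0), the comparison quadrilateral $\overline{Q}$ in $\mathbb{R}^{2}$ (built by gluing the two comparison triangles of a diagonal) is convex and each pair of corresponding points satisfies $\widetilde{d}\leq d_{\mathrm{Eucl}}$. Applying this to the midpoints of the ``vertical'' sides $[f(r),g(r)]$ for $r=\frac{s+t}{2}$ and using $\varphi\equiv D$ forces equality in all the comparison inequalities for $Q$; this is the flat-quadrilateral principle in CAT(0) geometry. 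Therefore $Q$ itself is isometric to a Euclidean rectangle of width $D$ and height $t-s$. Letting $s\to-\infty$ and $t\to+\infty$ and taking the nested union of these rectangles produces an isometric embedding of the Euclidean strip $\mathbb{R}\times[0,D]$ whose image is the convex hull of $f\cup g$, which is the desired flat strip.

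The main obstacle is the third step, namely the passage from ``distance function is constant'' to ``the region between the two geodesics is actually flat.'' This is not purely formal: it requires that one invoke the flat quadrilateral (or flat triangle) lemma, which in turn rests on the rigidity case of the CAT(0) comparison inequality. The first two steps are standard soft arguments from convexity and asymptoticity; the rigidity argument is where the CAT(0) hypothesis on $\widetilde{S}$ is genuinely used, and where presence of conical points of angle $>2\pi$ (rather than $<2\pi$) becomes essential through the global CAT(0) property already established in the preliminaries.
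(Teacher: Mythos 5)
The paper offers no proof of this statement at all: it is quoted directly from \cite[Cor.\ 5.8.ii]{[Bal]}, so there is no in-paper argument to compare yours against. Your sketch is essentially the standard proof from that reference (convexity plus two-sided boundedness of $t\mapsto d\bigl(f(t),g(t)\bigr)$ forces constancy, and the equality case of the CAT(0) comparison then flattens the quadrilaterals) and it is correct; the one step worth tightening is that constancy of $d\bigl(f(t),g(t)\bigr)$ by itself only yields flat parallelograms, and to get your rectangles you should also observe that $t\mapsto d\bigl(f(t),g(\mathbb{R})\bigr)$ is convex, bounded, hence constant, and agrees with $d\bigl(f(t),g(t)\bigr)$ at $t=0$ by your normalization and therefore everywhere --- though either way the nested union is the asserted flat strip, so the conclusion stands.
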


\begin{definition}
\label{stas}We say that $f:\mathbb{R}\rightarrow \widetilde{X}$ is a unique
geodesic if for any geodesic $g:\mathbb{R}\rightarrow \widetilde{X}$ with
$f\left(  -\infty \right)  =g\left(  -\infty \right)  $ and $f\left(
\infty \right)  =g\left(  \infty \right)  ,$ $g$ is a re-parametrization of $f.$
We say that $f:\mathbb{R}\rightarrow \widetilde{X}$ is a closed (resp.
non-closed) geodesic if $p\left(  f\right)  $ is closed, that is, periodic
(resp. non-closed, that is, not periodic) in $X.$
\end{definition}

The \textit{limit set} $\Lambda(\Gamma)$ of $\Gamma$ is defined to be
$\Lambda(\Gamma)=\overline{\Gamma x}\cap \partial \widetilde{X},$ where $x$ is
an arbitrary point in $\widetilde{X}.$ Since the action of $\Gamma$ on
$\widetilde{X}$ is not assumed to be co-compact, it does not follow in general
that $\Lambda(\Gamma)=\partial \widetilde{X}.$ However, we assume throughout
that $\Lambda(\Gamma)=\partial \widetilde{X}.$

For each non-trivial element $\varphi \in \Gamma$ and each $x\in \widetilde{X}$ the sequence
$\varphi^{n}(x)$ (resp. $\varphi^{-n}(x))$ has a limit point $\varphi
(+\infty)$ (resp. $\varphi(-\infty))$ in $\partial \widetilde{X}$ when
$n\rightarrow+\infty.$ This is equivalent to saying that $\Gamma$ has no
elliptic elements which holds as the action of $\Gamma$ is assumed to be free
(see \cite[Ch.II, Prop. 3.2]{[Bal]}). However, as $\Gamma$ can contain
parabolic elements, $\varphi(+\infty)$ and $\varphi(-\infty)$ may coincide. In
the case $\phi$ is a hyperbolic element of $\Gamma,$ the point $\varphi
(+\infty)$ is called \textit{attractive} and the point $\varphi(-\infty)$
\textit{repulsive} point of $\varphi.$

As $\Gamma$ is a discrete group of isometries of $\widetilde{X}$ we have the
following result from \cite{[Coo]}

\begin{proposition}
[{Prop. 1.7 Chapter II in \cite{[Coo]}}]\label{gammaclear} Let $\varphi$ be a
hyperbolic element of $\Gamma$ and $\psi$ any element of $\Gamma.$ If
$Fix(\psi)$ is the set of points in $\partial \widetilde{X}$ fixed by the
action of $\psi, $ then either
\[
\left \{  \varphi(-\infty) ,\varphi(+\infty) \right \}  \cap Fix(\psi) =
\emptyset \mathrm{\ or,\ }\left \{  \varphi(-\infty) ,\varphi(+\infty) \right \}
\subset Fix(\psi) .
\]

\end{proposition}

It follows that $f,g$ are two closed non-homotopic geodesics then $f$ and $g$
cannot be asymptotic. Thus if $F_{h}\subset \partial \widetilde{X}$ denotes the
set of limit points of all hyperbolic elements of $\Gamma$ then $F_{h}$ splits
as the disjoint union
\[
F_{h}=F_{h}^{u}\sqcup F_{h}^{nu}%
\]
where
\[
F_{h}^{nu}:=\left \{  \xi \in \partial \widetilde{X}\bigm \vert \xi=g\left(
+\infty \right)  \mathrm{\ for\ some\ }g\mathrm{\mathrm{\ }%
\ closed\ and\ non-unique}\right \}
\]
and
\[
F_{h}^{u}:=\left \{  \xi \in \partial \widetilde{X}\bigm \vert \xi=g\left(
+\infty \right)  \mathrm{\ for\ some\ }g\mathrm{\mathrm{\ }%
\ closed\ and\ unique}\right \}
\]
Observe that $F_{h}^{u},F_{h}^{nu}$ are invariant under the action of
$\Gamma.$

\noindent \textbf{Standing Assumptions}: Let $X=\widetilde{X}/\Gamma$ where
$\widetilde{X}$ is a proper and geodesically complete CAT(0) space with $\partial \widetilde{X}$ connected and
$\Gamma$ a non-elementary discrete group of isometries acting freely on
$\widetilde{X}$ with $\Lambda(\Gamma) = \partial \widetilde{X}$ such that
$\widetilde{X}$ satisfies the following conditions:

\begin{description}
\item[($\mathrm{\Delta}$)] the space $\widetilde{X}$ is hyperbolic in the
sense of Gromov.

\item[(U)] if $f$ is a non-closed geodesic in $\widetilde{X}$, then $f$ is unique.

\item[(C)] if $f,g$ are asymptotic geodesics with $f\left(  +\infty \right)
=g\left(  +\infty \right)  \in \partial \widetilde{X}\setminus F_{h}^{nu}$ then
for appropriate parametrizations of $f,g$
\[
\lim_{t\rightarrow \infty}d\bigl(f\left(  t\right)  ,g\left(  t\right)
\bigr)=0.
\]

\item[(D)] The set
\[
\left \{  \left(  g(+\infty),g(-\infty)\right)
:g\mathrm{\ is\ closed\ and\ unique}\right \}
\]
is dense in $\partial^{2} \widetilde{X}.$
\end{description}

The geodesic flow for a complete geodesic metric space $X$ is defined by the map

\begin{center}
$\mathbb{R}\times GX\rightarrow GX$
\end{center}

\noindent where $GX$ is the space of all local isometric maps $g:\mathbb{R}%
\rightarrow X$ (see Section \ref{sectionproperties} below for precise
definition and properties) and the action of $\mathbb{R}$ is given by right
translation, i.e. for all $t\in \mathbb{R}$ and $g\in GX$, $\left(  t,g\right)
\rightarrow t\cdot g$ where $t\cdot g:\mathbb{R}\rightarrow X$ is the geodesic
defined by $\left(  t\cdot g\right)  \left(  s\right)  =g\left(  s+t\right)
,s\in \mathbb{R}.$

\begin{definition}
\label{transitivity}The geodesic flow $\mathbb{R}\times GX\rightarrow GX$ is
topologically transitive if given any non-empty open sets $\mathcal{O}$ and
$\mathcal{U}$ in $GX$ there exists a sequence $t_{n}\rightarrow \infty$ such
that $t_{n}\cdot \mathcal{O}\cap \mathcal{U}\neq \emptyset$ for all $n.$
\end{definition}

\begin{definition}
\label{mix}The geodesic flow $\mathbb{R}\times GX\rightarrow GX$ is
topologically mixing if given any non-empty open sets $\mathcal{O}$ and
$\mathcal{U}$ in $GX$ there exists a real number $t_{0}>0$ such that for all
$\left \vert t\right \vert \geq t_{0},$ $t\cdot \mathcal{O}\cap \mathcal{U}%
\neq \emptyset.$
\end{definition}

The main theorem of this paper is the following

\begin{theorem}
\label{mainn}Let $X$ be the quotient of a CAT(0) space $\widetilde{X}$ by a
non-elementary discrete group of isometries $\Gamma$ acting freely on
$\widetilde{X}$ such that $\partial \widetilde{X}$ is connected and equal to
the limit set $\Lambda \left(  \Gamma \right)  .$ If conditions ($\Delta$), (U),
(C) and (D) stated above are satisfied, then the geodesic flow $\mathbb{R}%
\times GX\rightarrow GX$ is topologically mixing.
\end{theorem}

We will use the following results from \cite{[Coo]}. Let $Z$ be a proper 
$\delta$-hyperbolic geodesic metric space and $\Gamma$ a group of isometries of 
$Z$ acting properly discontinously on $Z$ such that the cardinality of the limit 
set $\Lambda (\Gamma)$ is infinite (in fact, the results below will be used in 
cases where $\Lambda (\Gamma) = \partial Z $).

\begin{proposition}
[{Cor. 4.2 and Cor. 6.3 Chapter II in \cite{[Coo]}}]\label{dense2} There
exists an orbit of $\Gamma$ dense in $\Lambda (\Gamma) \times \Lambda (\Gamma).$ 
In particular, for every $\xi \in \Lambda (\Gamma),$ 
the orbit $\Gamma \cdot \xi$ is dense in $\Lambda (\Gamma) .$
\end{proposition}

\begin{proposition}
[{Cor. 5.1, Chapter II in \cite{[Coo]}}]\label{dense3}The set
\[
\left \{  \left(  \phi(+\infty),\phi(-\infty)\right)  :\phi \in \Gamma
\mathrm{\ is\ a\ hyperbolic\ element}\right \}
\]
is dense in $\Lambda (\Gamma)\times \Lambda (\Gamma).$
\end{proposition}

%Since we assume that $\Lambda (\Gamma) = \partial \widetilde{X}$, 
%the latter proposition asserts that the set
%\[
%\left \{  \left(  g(+\infty),g(-\infty)\right):g\mathrm{\ is\ closed}\right \}
%\]
%is dense in $\partial^{2} \widetilde{X}$ which is weaker that Condition (D).

Recall also that the boundary $\partial Y$ of a complete geodesic metric space
$Y$ can be defined, as a topological space, using Busemann functions as
explained in \cite[Ch. II, Sec. 1]{[Bal]}, where it is shown that the function
$\alpha:Y\times Y\times Y\rightarrow \mathbb{R}$ given by
\[
\alpha \left(  y,x,x^{\prime}\right)  :=d\left(  x^{\prime},y\right)  -d\left(
x,y\right)
\]
extends to a continuous function
\[
\alpha:\left(  Y\cup \partial Y\right)  \times Y\times Y\rightarrow \mathbb{R}%
\]
which is Lipschitz with respect to the second and third variable.

By \cite[Lemma 2.2 Ch.II]{[Bal]} and the discussion following it we have that
the topology given to $\partial \widetilde{X}$ via Busemann functions coincides
with the compact-open topology (given to $\partial \widetilde{X}$ using
geodesic rays and the fact that $\widetilde{X}$ is a CAT(0) space). Thus, we
obtain a continuous function
\[
\alpha:\left(  \partial \widetilde{X}\cup \widetilde{X}\right)  \times
\widetilde{X}\times \widetilde{X}\rightarrow \mathbb{R}%
\]
given by
\begin{equation}
\alpha \left(  y,x,x^{\prime}\right)  :=d\left(  x^{\prime},y\right)  -d\left(
x,y\right)  \label{generalized}%
\end{equation}
for $\left(  y,x,x^{\prime}\right)  \in \widetilde{X}\times \widetilde{X}%
\times \widetilde{X}$ and
\begin{equation}
\alpha \left(  \xi,x,x^{\prime}\right)  :=\lim_{n\rightarrow \infty}%
\alpha \left(  y_{n},x,x^{\prime}\right)  \label{generalizeddd}%
\end{equation}
for $\left(  \xi,x,x^{\prime}\right)  \in \partial \widetilde{X}\times
\widetilde{X}\times \widetilde{X}$ where $y_{n}\rightarrow \xi$ (see \cite[Ch.
II, Prop. 2.5]{[Bal]}).

This function, called the \textit{generalized Busemann function}, in fact,
generalizes the classical Busemann function whose definition makes sense in
our context: for arbitrary $\xi \in \partial \widetilde{X}$ and $x\in
\widetilde{X},$ the restriction
\[
\alpha \left(  \xi,x,\cdot \right)  \equiv \alpha|_{\left \{  \xi \right \}
\times \left \{  x\right \}  \times \widetilde{X}}%
\]
is simply the Busemann function associated to the unique geodesic ray from $x$
to $\xi.$

\noindent We will use the following facts about the generalized Busemann function.

\begin{lemma}
\label{busman}(a) The generalized Busemann function $\alpha$ is Lipschitz with
respect to the second and third variable with Lipschitz constant 1.
\newline(b) If $f,g\in G\widetilde{X}$ with $f\left(  -\infty \right)
=g\left(  +\infty \right)  $ then
\[
\alpha \left(  g\left(  +\infty \right)  ,g\left(  0\right)  ,f\left(  t\right)
\right)  =t+\alpha \left(  g\left(  +\infty \right)  ,g\left(  0\right)
,f\left(  0\right)  \right)  .
\]
(c) If $f,g\in G\widetilde{X}$ are asymptotic geodesics, then there exists a
unique re-parametrization $\overline{f}$ of $f$ such that $\alpha
\bigl(f\left(  +\infty \right)  ,\overline{f}\left(  0\right)  ,g\left(
0\right)  \bigr)=0.$
\end{lemma}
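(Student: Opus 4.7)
My plan is to derive all three parts directly from the definition $\alpha(\xi,x,x')=d(x',\xi)-d(x,\xi)$ on $\widetilde{S}\times\widetilde{S}\times\widetilde{S}$ together with its continuous extension to $\left(\partial\widetilde{S}\cup\widetilde{S}\right)\times\widetilde{S}\times\widetilde{S}$. For (a) I would apply the triangle inequality twice: $|\alpha(\xi,x,x')-\alpha(\xi,y,x')|=|d(y,\xi)-d(x,\xi)|\le d(x,y)$, and symmetrically in the third slot. Choosing a sequence $\xi_n\in\widetilde{S}$ with $\xi_n\to\xi\in\partial\widetilde{S}$ and passing to the limit preserves each estimate, yielding the Lipschitz property with constant $1$ in each of the second and third variables.

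For (b) I would write $\xi:=g(+\infty)=f(-\infty)$. The main tool will be the cocycle identity $\alpha(\xi,x,z)=\alpha(\xi,x,y)+\alpha(\xi,y,z)$, which is obvious on $\widetilde{S}$ and extends by continuity of $\alpha$ to boundary values of $\xi$. Applied with $(x,y,z)=(g(0),f(0),f(t))$, the identity is reduced to showing $\alpha(\xi,f(0),f(t))=t$. For this I would use the sequence $f(-T)\in\widetilde{S}$, which converges to $\xi$ as $T\to+\infty$ since $f(-\infty)=\xi$: for every $T>|t|$ one has $d(f(t),f(-T))-d(f(0),f(-T))=(t+T)-T=t$, and passing to the limit in the defining expression for $\alpha$ gives the equality.

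For (c) I would set $\xi:=f(+\infty)=g(+\infty)$. An entirely analogous calculation along the sequence $f(T)\to\xi$ as $T\to+\infty$ produces $\alpha(\xi,f(s),f(0))=s$ for every $s\in\mathbb{R}$. Combined with the cocycle identity this reads $\alpha(\xi,f(s),g(0))=s+\alpha(\xi,f(0),g(0))$, a continuous strictly increasing bijection $\mathbb{R}\to\mathbb{R}$ in $s$. Consequently $s_0:=-\alpha(\xi,f(0),g(0))$ is the unique real number at which this vanishes, and $\overline{f}(t):=f(t+s_0)$ is the required re-parametrization; uniqueness follows from strict monotonicity.

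The only slightly delicate step is justifying the cocycle identity at boundary points together with the joint continuity of $\alpha$, since both are needed to pass the elementary computations on $\widetilde{S}$ to limits at infinity. These facts are standard for CAT(0) Hadamard spaces and can be invoked from Ballmann's treatment cited in the excerpt; once they are in hand, parts (a), (b) and (c) are all short, essentially formal calculations.
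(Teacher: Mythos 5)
Your argument is correct and complete. The paper gives no proof of its own here — it cites Ballmann for (a) and asserts that the proof of Lemma 2.3 of [CT3] applies verbatim for (b) and (c) — and your computation (the triangle inequality for (a), and the cocycle identity $\alpha(\xi,x,z)=\alpha(\xi,x,y)+\alpha(\xi,y,z)$ combined with evaluating $\alpha$ along the geodesic $f$ itself via the sequences $f(\mp T)\to f(\mp\infty)$ for (b) and (c)) is precisely the standard argument those references contain, so you are taking essentially the same route, just written out in full.
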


A\ proof of (a) can be found in \cite[Ch. II, Sec. 1]{[Bal]} and the proof
given for Lemma 2.3 in \cite{[CT3]} holds verbatim for (b) and (c).

\begin{definition}
\label{ssset}We say that a geodesic $h\in G\widetilde{X}$ belongs to the
stable set $W^{s}\left(  g\right)  $ of a geodesic $g$ if $g,h$ are
asymptotic. Two points $x,x^{\prime}\in \widetilde{X}$ are said to be
equidistant from a point $\xi \in \partial \widetilde{X}$ if $\alpha \left(
\xi,x,x^{\prime}\right)  =0.$

We say that a geodesic $h\in G\widetilde{X}$ belongs to the strong stable set
$W^{ss}\left(  g\right)  $ of a geodesic $g$ if $h\in W^{s}\left(  g\right)  $
and $g\left(  0\right)  ,h\left(  0\right)  $ are equidistant from $g\left(
\infty \right)  =h\left(  \infty \right)  $.

Similarly, if $h,g\in GX,$ we say that $h\in W^{ss}\left(  g\right)  $
$\bigl($respectively $W^{s}\left(  g\right)  \bigr)$ if there exist lifts
$\widetilde{h},\widetilde{g}\in G\widetilde{X}$ of $h,g$ such that
$\widetilde{h}\in W^{ss}\left(  \widetilde{g}\right)  $ $\bigl($respectively
$W^{s}\left(  \widetilde{g}\right)  \bigr).$
\end{definition}

We next restate Condition (C) using the terminology of strong stable sets.

\begin{proposition}
\label{asy}Let $f,g\in G\widetilde{X}$ with $f\in W^{ss}\left(  g\right)  .$
Assume $f\left(  +\infty \right)  =g\left(  +\infty \right)  \in \partial
\widetilde{X} \setminus F_{h}^{nu},$ that is, if $h\in W^{s}\left(  g\right)
$ then $h$ is not a non-unique closed geodesic. Then $\mathrm{lim}%
_{t\rightarrow \infty}d\bigl(f\left(  t\right)  ,g\left(  t\right)  \bigr)=0.$
\end{proposition}

The proof of the above proposition is identical with the one given in
\cite[Prop. 2.2]{[CT3]}. We conclude this section with the following

\begin{lemma}
\label{eberlein210}Let $x,y\in \widetilde{X}$ and $\xi \in \partial \widetilde{X}$
with $\alpha \left(  \xi,x,y\right)  =0.$ For any open set $O$ in
$\widetilde{X}$ containing $y,$ there exist open sets $C$ and $D$ of
$\widetilde{X}$ and $\partial \widetilde{X}$ respectively such that $\left(
x,\xi \right)  \in C\times D$ and for every $\left(  x^{\prime},\xi^{\prime
}\right)  \in C\times D$ there exists $y^{\prime}\in O$ with $\alpha \left(
\xi^{\prime},x^{\prime},y^{\prime}\right)  =0.$
\end{lemma}

\begin{proof}
Given an open set $O$ containing $y,$ choose $\varepsilon>0$ so that the open
ball $B\left(  y,\varepsilon \right)  \subset O.$ As $\alpha \left(
\xi,x,y\right)  =0,$ by continuity of $\alpha$ we may find open sets
$C\subset \widetilde{X}$ and $D\subset \partial \widetilde{X}$ such that $\left(
x,\xi \right)  \in C\times D$ and
\[
\left(  x^{\prime},\xi^{\prime}\right)  \in C\times D\Rightarrow \left \vert
\alpha \left(  \xi^{\prime},x^{\prime},y\right)  \right \vert <\varepsilon.
\]
These are the desired open sets.\newline Given $\left(  x^{\prime},\xi
^{\prime}\right)  \in C\times D,$ let $r^{\prime}$ be the geodesic ray with
$r^{\prime}(0)=y$ and $r^{\prime}(+\infty)=\xi^{\prime}.$ Denote by
$f^{\prime}$ any geodesic line which extends $r^{\prime}.$ By Lemma
\ref{busman}(b)
\[
\bigm \vert \alpha \left(  \xi^{\prime},x^{\prime},f^{\prime}(t)\right)
-\alpha \left(  \xi^{\prime},x^{\prime},f^{\prime}(0)\right)  \bigm \vert=|t|.
\]
Let $t_{0}=\alpha \left(  \xi^{\prime},x^{\prime},y\right)  .$ Then $\lvert
t_{0}\rvert<\varepsilon$ and $\alpha \left(  \xi^{\prime},x^{\prime},f^{\prime
}(t_{0})\right)  =0.$ Since $f^{\prime}(t)\in O$ for $\lvert t\rvert
<\varepsilon$ we have $y^{\prime}:=f^{\prime}(t_{0})\in O$ and\newline%
\hspace*{5cm}$\alpha \left(  \xi^{\prime},x^{\prime},y^{\prime}\right)
=\alpha \left(  \xi^{\prime},x^{\prime},f^{\prime}(t_{0})\right)  =0.$
\end{proof}

\subsection{Properties of geodesics and geodesic rays\label{sectionproperties}%
}

Let $GX$ be the space of all local isometric maps $g:\mathbb{R}\rightarrow X.$
As usual, the image of such a $g$ will be referred to as a geodesic in $X.$
Consider also the space $G\widetilde{X}$ of all isometric maps $g:\mathbb{R}%
\rightarrow \widetilde{X}.$ Both spaces $GX$ and $G\widetilde{X}$ are equipped
with the compact-open topology. Moreover the space $G\widetilde{X}$ with the
compact-open topology is metrizable (see \cite[8.3.B]{[Gro]}) and second countable.

We will denote by $p$ both projections $\widetilde{X}\rightarrow X$ and
$G\widetilde{X}\rightarrow GX.$ Denote by $R\widetilde{X}$ the set of all
geodesic rays in $\widetilde{X},$ that is, the set of all isometric maps
$r:\left[  0,\infty \right)  \rightarrow \widetilde{X}$ equipped with the
compact open topology.

\begin{proposition}
\label{rvfunction}The function $\varrho:R\widetilde{X}\rightarrow \widetilde
{X}\times \partial \widetilde{X}$ given by
\[
\varrho \left(  r\right)  =\left(  r\left(  0\right)  ,r\left(  \infty \right)
\right)  ,
\]
where $r\left(  \infty \right)  $ denotes the unique boundary point determined
by $r,$ is a homeomorphism.
\end{proposition}

\begin{proof}
By uniqueness of geodesic rays the inverse function $\varrho^{-1}$ is well
defined for all $\left(  x,\xi \right)  \in \widetilde{X}\times \partial
\widetilde{X}.$

We first show continuity of $\varrho^{-1}.$ Let $\left(  x_{0},\xi_{0}\right)
\in \widetilde{X}\times \partial \widetilde{X}$ and $\left \{  x_{n}\right \}
\subset \widetilde{X},$ $\left \{  \xi_{n}\right \}  \subset \partial \widetilde
{X}$ be sequences with $x_{n}\rightarrow x_{0}$ and $\xi_{n}\rightarrow \xi
_{0}.$ Denote by $r_{n},$ $n\geq0$ the unique geodesic ray with $r_{n}\left(
0\right)  =x_{n}$ and $r_{n}\left(  \infty \right)  =\xi_{n}.$ Similarly,
denote by $q_{n},$ $n\geq1$ the unique geodesic ray with $q_{n}\left(
0\right)  =x_{0}$ and $q_{n}\left(  \infty \right)  =\xi_{n}.$ The assumption
$\xi_{n}\rightarrow \xi_{0}$ means, by definition, that
\begin{equation}
q_{n}\rightarrow r_{0} \label{rsyng}%
\end{equation}
and we need to show $r_{n}\rightarrow r_{0}.$ For each $n\in \mathbb{N},$ the
geodesic rays $q_{n}$ and $r_{n}$ are asymptotic, hence, the distance function
$t\rightarrow d\left(  q_{n}\left(  t\right)  ,r_{n}\left(  t\right)  \right)
,$ $t\geq0$ is convex (see \cite[Ch.I, Proposition 5.4]{[Bal]}) and bounded.
Therefore, it is decreasing and $d\left(  x_{0},x_{n}\right)  $ is an upper
bound for all $t\geq0$ because
\begin{equation}
d\left(  x_{0},x_{n}\right)  =d\left(  q_{n}\left(  0\right)  ,r_{n}\left(
0\right)  \right)  \geq d\left(  q_{n}\left(  t\right)  ,r_{n}\left(
t\right)  \right)  \label{rsdecreasing}%
\end{equation}
Let $\mathcal{O}$ be a neighborhood of $r_{0}$ of the form
\begin{equation}
\mathcal{O}\left(  r_{0},K,\varepsilon \right)  =\left \{  r^{\prime}\in
R\widetilde{X}\bigm \vert d\left(  r^{\prime}\left(  t\right)  ,r_{0}\left(
t\right)  \right)  <\varepsilon \mathrm{\ for\ all\ }t\in \left[  0,K\right]
\right \}  . \label{rnstrict}%
\end{equation}
Find $n_{1}\in \mathbb{N}$ such that $d\left(  x_{0},x_{n}\right)
<\varepsilon/2$ for all $n>n_{1}$ which, by (\ref{rsdecreasing}), yields
\[
d\left(  q_{n}\left(  t\right)  ,r_{n}\left(  t\right)  \right)
<\frac{\varepsilon}{2}\mathrm{\ for\ all\ }n>n_{1}\mathrm{\ and\ }t\in \left[
0,K\right]  .
\]
As $q_{n}\rightarrow r_{0}$ we may find $n_{2}\in \mathbb{N}$ such that
$q_{n}\in \mathcal{O}\left(  r_{0},K,\varepsilon/2\right)  $ for all $n>n_{2}$
which means%
\[
d\left(  q_{n}\left(  t\right)  ,r_{0}\left(  t\right)  \right)
<\frac{\varepsilon}{2}\mathrm{\ for\ all\ }n>n_{2}\mathrm{\ and\ }t\in \left[
0,K\right]  .
\]
Combining the last two inequalities we have
\[
d\left(  r_{n}\left(  t\right)  ,r_{0}\left(  t\right)  \right)
<\frac{\varepsilon}{2}+\frac{\varepsilon}{3}\mathrm{\ for\ all\ }%
n>\max \left \{  n_{1},n_{2}\right \}  \mathrm{\ and\ }t\in \left[  0,K\right]
\]
which shows that $r_{n}\in \mathcal{O}$ for $n$ large enough. Thus,
$r_{n}\rightarrow r_{0}$ as desired.

For the continuity of $\varrho,$ let $\left \{  r_{n}\right \}  \subset
R\widetilde{X}$ be a sequence converging to a geodesic ray $r_{0}.$ Clearly,
$r_{n}\left(  0\right)  \rightarrow r_{0}\left(  0\right)  $ and we need to
check that $r_{n}\left(  +\infty \right)  \rightarrow r_{0}\left(
+\infty \right)  .$ This amounts to verifying that
\[
q_{n}\rightarrow r_{0},
\]
where $q_{n},$ $n\geq1$ is the unique geodesic ray with $q_{n}\left(
0\right)  =x_{0}$ and $q_{n}\left(  \infty \right)  =r_{n}\left(
+\infty \right)  .$ Since for each $n,$ the geodesic rays $r_{n}$ and $q_{n}$
are asymptotic, an argument similar to the one given above for $\varrho^{-1},$
shows that for an arbitrary neighborhood $\mathcal{O}$ of $r_{0},$ $q_{n}%
\in \mathcal{O}$ for $n$ large enough.
\end{proof}

\begin{proposition}
\label{bakis}Let $f$ be a unique geodesic in $G\widetilde{X}$ and $\left \{
f_{n}\right \}  \subset G\widetilde{X}$ a sequence of geodesics with
$f_{n}\left(  +\infty \right)  \rightarrow f\left(  +\infty \right)  $ and
$f_{n}\left(  -\infty \right)  \rightarrow f\left(  -\infty \right)  .$ Then we
may re-parametrize $\left \{  f_{n}\right \}  $ such that $f_{n}\rightarrow f.$
\end{proposition}

\begin{proof}
Fix $x_{0}:=f\left(  0\right)  $ as base point and choose sequences $\left \{
x_{n}\right \}  ,\left \{  y_{n}\right \}  $ with $x_{n},y_{n}\in
\operatorname{Im}f_{n}$ for each $n,$ such that
\[
x_{n}\rightarrow f\left(  +\infty \right)  ,y_{n}\rightarrow f\left(
-\infty \right)  \, \, \mathrm{and}\, \,d\left(  x_{0},x_{n}\right)  =d\left(
x_{0},y_{n}\right)
\]
for all $n.$ Denote by $m_{n}$ the midpoint of the segment $\left[
x_{n},y_{n}\right]  \subset \operatorname{Im}f_{n}$ and, by passing if
necessary to a subsequence, we may assume that $\left \{  m_{n}\right \}  $
converges to $m\in \widetilde{X}\cup \partial \widetilde{X}.$ \newline Recall
that equivalence classes of unbounded sequences can be used, via the Gromov
product
\[
\left(  x,y\right)  _{x_{0}}:=\frac{1}{2}\left(  d\left(  x_{0},x\right)
+d\left(  x_{0},y\right)  -d\left(  x,y\right)  \right)
\]
to define the boundary of a hyperbolic space (see \cite{[CDP]} Ch.2, \S 2). We
examine the following three cases:

\begin{itemize}
\item $\left \{  m_{n}\right \}  $ is bounded, that is, $m_{n}\rightarrow
m\in \widetilde{X},$

\item $\left \{  m_{n} \right \}  $ is unbounded and equivalent to $\left \{
x_{n}\right \}  $ (and, hence, to $\left \{  y_{n}\right \}  $), that is,
$\left(  m_{n},x_{n}\right)  _{x_{0}}\rightarrow \infty$ as $n\rightarrow
\infty,$

\item $\left \{  m_{n} \right \}  $ is unbounded and $\left \{  m_{n}\right \}  $
is not equivalent to $\left \{  x_{n}\right \}  $ (and, hence, neither to
$\left \{  y_{n}\right \}  $).
\end{itemize}

In the first case, since $f$ is unique, $m$ must belong to $\operatorname{Im}%
f,$ hence, using the real numbers $t_{m}$ where $f\left(  t_{m}\right)  =m$
and $f_{n}\left(  t_{m_{n}}\right)  =m_{n}$ with the appropriate signs
according to orientation, we obtain the desired parametrization of each
$f_{n}$. In the second case, by the choice of $\left \{  x_{n}\right \}
,\left \{  y_{n}\right \}  ,$ we have $\left(  m_{n},x_{n}\right)  _{x_{0}%
}=\left(  m_{n},y_{n}\right)  _{x_{0}}.$ Thus, all three sequences $\left \{
x_{n}\right \}  ,\left \{  y_{n}\right \}  $ and $\left \{  m_{n}\right \}  $ are
equivalent, that is, they define the same boundary point, a contradiction
since $f\left(  +\infty \right)  \neq f\left(  -\infty \right)  .$ In the third
case $\left \{  m_{n}\right \}  $ converges to a point $m\in \partial
\widetilde{X}$ with $m\neq f\left(  \pm \infty \right)  .$ This case cannot
occur either. Indeed, as $\partial \widetilde{X}$ is metrizable, we may find
neighborhoods $O$ and $U$ of $f\left(  +\infty \right)  $ and $f\left(
-\infty \right)  $ respectively, such that $m\notin O\cup U.$ Set
\[
Q\left(  O,U\right)  :=\left \{  x\in \widetilde{X}\bigm \vert \, \,x\in
\operatorname{Im}g,\, \mathrm{for}\, \, \mathrm{some}\, \,g\in G\widetilde
{X}\, \, \mathrm{with}\, \,g\left(  \pm \infty \right)  \in O\cup U\right \}  .
\]
Then, by \cite[Part B, Lemma 3]{[Coo]}, $O\cup U$ is the accumulation set of
$Q\left(  O,U\right)  $ in $\partial \widetilde{X},$ a contradiction, since
$m\notin O\cup U.$
\end{proof}

\section{Mixing of the geodesic flow}

\subsection{Topological transitivity}

It is apparent that topological mixing implies topological transitivity.
However, in the proof of topological mixing below we will need a property
equivalent to topological transitivity, namely, that $\overline{W^{s}\left(
f\right)  }=GX$ for any $f\in GX.$ We are omitting the proof of the
equivalence since it will not be used in the sequel. In this section we will
establish this property (see Proposition \ref{all_stable} below). We need the
following\\[3mm]

\noindent \textbf{Duality Condition}: For each $f\in G\widetilde{X},$ there
exists a sequence of isometries $\left \{  \phi_{n}\right \}  _{n\in \mathbb{N}%
}\subset \Gamma \equiv \pi_{1}\left(  X\right)  $ such that $\phi_{n}\left(
x\right)  \rightarrow f\left(  +\infty \right)  $ and $\phi_{n}^{-1}\left(
x\right)  \rightarrow f\left(  -\infty \right)  $ for some (hence any)
$x\in \widetilde{X}.$

Topological transitivity for our class of spaces will then follow from the
following Theorem found in \cite[Th. 2.3 Ch. III]{[Bal]}.

\begin{theorem}
\label{ballduoteliatria}Let $Y$ be a geodesically complete separable Hadamard
space and $\Gamma$ a group of isometries of $Y$ satisfying the duality
condition. Then the following are equivalent:\newline(a) the geodesic flow is
topologically transitive mod$\, \, \Gamma.$ \newline(b) for some $\xi
\in \partial Y,$ the orbit $\Gamma \cdot \xi$ is dense in $\partial Y.$
\end{theorem}

\begin{lemma}
\label{dualityLemma}$\widetilde{X}$ satisfies the duality condition.
\end{lemma}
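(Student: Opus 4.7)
The plan is to combine Proposition \ref{dense3} with the known convergence behavior of hyperbolic isometries under iteration, followed by a standard diagonal argument.

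Fix $x\in\widetilde{S}$ and a geodesic $f\in G\widetilde{S}$, and set $\xi=f(+\infty)$, $\eta=f(-\infty)$; these are distinct points of $\partial\widetilde{S}$. The point is that for every hyperbolic isometry $\psi\in\Gamma$, the iterates satisfy $\psi^n(x)\to\psi(+\infty)$ and $\psi^{-n}(x)\to\psi(-\infty)$ as $n\to+\infty$, so if we could realize the pair $(\xi,\eta)$ as the fixed-point pair of some $\psi\in\Gamma$, we would be done by taking $\phi_n=\psi^n$. In general this is not possible, but by Proposition \ref{dense3} we may approximate $(\xi,\eta)$ in $\partial\widetilde{S}\times\partial\widetilde{S}$ by pairs of the form $\bigl(\psi_k(+\infty),\psi_k(-\infty)\bigr)$ with $\psi_k\in\Gamma$.

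With such a sequence $\{\psi_k\}$ fixed, I would proceed by a diagonal extraction. Working with any metric on $\partial\widetilde{S}\cong\mathbb{S}^1$ inducing its topology, choose $n_k\in\mathbb{N}$ so large that
\[
d_{\partial\widetilde{S}}\bigl(\psi_k^{\,n_k}(x),\psi_k(+\infty)\bigr)<\tfrac{1}{k}\quad\text{and}\quad d_{\partial\widetilde{S}}\bigl(\psi_k^{-n_k}(x),\psi_k(-\infty)\bigr)<\tfrac{1}{k},
\]
which is possible because $\psi_k$ is hyperbolic (by the discussion preceding Proposition \ref{dense1}, every non-trivial element of $\Gamma$ is hyperbolic). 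Setting $\phi_k:=\psi_k^{\,n_k}\in\Gamma$, the triangle inequality on $\partial\widetilde{S}$ together with $\psi_k(+\infty)\to\xi$ and $\psi_k(-\infty)\to\eta$ yields $\phi_k(x)\to\xi=f(+\infty)$ and $\phi_k^{-1}(x)\to\eta=f(-\infty)$.

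The parenthetical ``for some (hence any) $x$'' is automatic in our setting: if the conclusion holds for one basepoint, it holds for any other, since the $\Gamma$-action is by isometries and the orbit $\Gamma x$ stays within bounded distance of $\Gamma x'$ in the compactification $\widetilde{S}\cup\partial\widetilde{S}$ (boundary limits are insensitive to bounded perturbations of the approximating sequence). I don't anticipate a serious obstacle here; the only mild point of care is the choice of a metric (or neighborhood basis) on $\partial\widetilde{S}$ in which to measure the approximations, which is unproblematic since $\partial\widetilde{S}$ is a compact metrizable space and the isometric action of $\Gamma$ on $\widetilde{S}$ extends continuously to $\partial\widetilde{S}$.
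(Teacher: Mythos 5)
Your proof is correct and follows essentially the same route as the paper's: both rest on Proposition \ref{dense3} (density of the fixed-point pairs of hyperbolic elements of $\Gamma$) combined with taking high powers of the approximating hyperbolic isometries via a diagonal argument --- the paper phrases this as translating far along approximating closed geodesics $c_n\to f$, and treats the case of $f$ closed separately, but the mechanism is the same. The only cosmetic point is that your quantities $d_{\partial\widetilde{S}}\bigl(\psi_k^{\,n_k}(x),\psi_k(+\infty)\bigr)$ compare an interior point of $\widetilde{S}$ with a boundary point, so they should be measured in a metric on the compactification $\widetilde{S}\cup\partial\widetilde{S}$ rather than on $\partial\widetilde{S}$ alone.
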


\begin{proof}
If $f\in G\widetilde{X}$ is closed we may consider $\left \{  \phi_{n}\right \}
$ to be powers of the hyperbolic isometry corresponding to $f.$ Then clearly
$\phi_{n}\left(  f\left(  0\right)  \right)  \rightarrow f\left(
+\infty \right)  $ and $\phi_{n}^{-1}\left(  f\left(  0\right)  \right)
\rightarrow f\left(  -\infty \right)  .$

Suppose $f\in G\widetilde{X}$ is non closed. By Proposition \ref{dense3},
there exists a sequence of closed geodesics $c_{n}$ such that $c_{n}\left(
+\infty \right)  \rightarrow f\left(  +\infty \right)  $ and $c_{n}\left(
-\infty \right)  \rightarrow f\left(  -\infty \right)  .$ Using Proposition
\ref{bakis} and changing appropriately the parametrizations of each $c_{n},$
we obtain $c_{n}\rightarrow f.$ We may alter the period $t_{n}$ of each
$c_{n}$ so that $t_{n}\nearrow+\infty$ as $n\rightarrow \infty.$ Set $\phi_{n}$
to be the isometry which corresponds to translating $c_{n}$ by $t_{n}.$ Then,
$\phi_{n}\left(  c_{n}\left(  0\right)  \right)  \rightarrow f\left(
+\infty \right)  $ and, since $f\left(  0\right)  $ is at bounded distance from
$c_{n}\left(  0\right)  $ for all $n,$ it follows that $\phi_{n}\left(
f\left(  0\right)  \right)  \rightarrow f\left(  +\infty \right)  .$ Similarly
we show that $\phi_{n}^{-1}\left(  f\left(  0\right)  \right)  \rightarrow
f\left(  -\infty \right)  .$
\end{proof}

\begin{theorem}
\label{orbit}There exists a geodesic $\gamma$ in $GX$ whose orbit
$\mathbb{R}\gamma$ under the geodesic flow is dense in $GX.$ Equivalently, the
geodesic flow is topologically transitive.
\end{theorem}

\begin{proof}
\noindent Equivalence of the two statements is a general fact which follows
from separability of $\widetilde{X}$ and 2nd countability of the topology of
$G\widetilde{X}$ (see \cite[Remark 2.2 Ch. III]{[Bal]}). By Proposition
\ref{dense2}, for any $\xi \in \partial \widetilde{X}$ the orbit $\Gamma \cdot \xi$
is dense in $\partial \widetilde{X}.$ Moreover, by the above lemma,
$\widetilde{X}$ satisfies the duality condition, thus, by the above mentioned
Theorem \ref{ballduoteliatria} from \cite{[Bal]}, the geodesic flow is
topologically transitive.
\end{proof}

Observe that, in particular, the image of such a geodesic $\gamma$ is a dense
subset of $X.$ Therefore, the geodesic $\gamma$ whose orbit is dense in $GX$
cannot be a closed geodesic. We will need the following

\begin{corollary}
\label{denseorbitunc}There exists a geodesic $\gamma$ in $GX$ whose orbit
$\mathbb{R}\gamma$ under the geodesic flow is dense in $GX$ and, in addition,
$\widetilde{\gamma}\left(  +\infty \right)  \notin F_{h}^{nu}$ for some, hence
any, lift $\widetilde{\gamma}$ of $\gamma.$
\end{corollary}

\begin{proof}
We first show that the cardinality of the set $D=\left \{  \gamma
\bigm \vert \overline{\mathbb{R}\gamma}=GX\right \}  $ is uncountably infinite.
To check this, observe that $D$ is just the intersection
\[
D=\cap_{\mathcal{B}\in \underline{\mathfrak{B}}}\mathbb{R}\mathcal{B}%
\]
where $\underline{\mathfrak{B}}$ is a countable basis for $GX$ with
$\emptyset$ excluded. Since the geodesic flow is topologically transitive,
each $\mathbb{R}\mathcal{B}$ is dense and, clearly, open and non-empty. By
Baire's theorem, $D$ is non-empty and if $D=\left \{  \gamma_{1},\gamma
_{2},\ldots \right \}  $ were countable, then the countable intersection
\[
\cap_{\mathcal{B}\in \underline{\mathfrak{B}}^{\prime}}\mathbb{R}\mathcal{B}%
\]
where $\underline{\mathfrak{B}}^{\prime}=\underline{\mathfrak{B}}\cup \left(
\cup_{i=1}^{\infty}GX\setminus \left \{  \gamma_{i}\right \}  \right)  $ would be
empty, contradicting Baire's theorem. The Corollary now follows from the fact
that $F_{h}^{nu}\times F_{h}^{nu}$ is countable, thus, there exist $\gamma \in
D$ such that $\left(  \widetilde{\gamma}\left(  -\infty \right)  ,\widetilde
{\gamma}\left(  +\infty \right)  \right)  \notin F_{h}^{nu}\times F_{h}^{nu}.$
In other words, there exists a geodesic $\gamma$ with dense orbit in $GX$
whose lift to $G\widetilde{X}$ has at least one of its limit points in
$\partial \widetilde{X}\setminus F_{h}^{nu}.$ By replacing $\gamma$ with
$-\gamma$ we may assume that $\widetilde{\gamma}\left(  +\infty \right)  \notin
F_{h}^{nu}.$
\end{proof}

\begin{proposition}
\label{all_stable}For any $f\in GX,$ $\overline{W^{s}\left(  f\right)  }=GX.$
\end{proposition}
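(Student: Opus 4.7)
The plan is to show that every $h \in GS$ lies in $\overline{W^{s}(f)}$ by constructing, for any lift $\tilde h$ of $h$, a sequence of geodesics $\tilde g_{n} \in G\widetilde S$ converging to $\tilde h$ in the compact-open topology such that each $p(\tilde g_{n})$ is asymptotic to $f$. Fix lifts $\tilde f, \tilde h \in G\widetilde S$ and set $\xi := \tilde f(+\infty)$, $\eta := \tilde h(-\infty)$, $\zeta := \tilde h(+\infty)$. The guiding observation is that $p(\tilde g) \in W^{s}(f)$ as soon as $\tilde g(+\infty) = \psi(\xi)$ for some $\psi \in \Gamma$, because $\tilde g$ is then asymptotic to the lift $\psi \tilde f$ of $f$.

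The first step is to establish that the $\Gamma$-orbit of every boundary point is dense in $\partial \widetilde S$. This can be read off from the duality condition just proved: applied to $\tilde h$, it gives $\phi_{n} \in \Gamma$ with $\phi_{n}(\tilde h(0)) \to \zeta$ and $\phi_{n}^{-1}(\tilde h(0)) \to \eta$, and the standard convergence-group property of the boundary action of a hyperbolic group then yields $\phi_{n}(\xi) \to \zeta$ for any $\xi \neq \eta$. (Alternatively, one derives it from Proposition \ref{dense1} combined with the north-south dynamics of hyperbolic isometries, conjugating if needed to avoid repelling fixed points.) If $\xi = \eta$, one first replaces $\tilde f$ by a $\Gamma$-translate $\psi_{0}\tilde f$ whose $+\infty$-endpoint $\psi_{0}(\xi)$ differs from $\eta$, using the same density fact.

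Next, pick $\psi_{n} \in \Gamma$ with $\psi_{n}(\xi) \to \zeta$, and let $\tilde g_{n}$ be a geodesic joining $\eta$ to $\psi_{n}(\xi)$, whose existence is provided by Proposition \ref{existence rays, lnes, uniq}. Parametrize $\tilde g_{n}$ so that $\tilde g_{n}(0)$ is the point on $\operatorname{Im} \tilde g_{n}$ closest to $\tilde h(0)$. Continuous dependence of geodesics on their endpoints in a proper $\mathrm{CAT}(0)$ space (which follows from convexity of the distance function) then gives $\tilde g_{n} \to \tilde h$ in the compact-open topology on $G\widetilde S$. Since $\tilde g_{n}(+\infty) = \psi_{n}(\xi) = (\psi_{n}\tilde f)(+\infty)$, the geodesics $\tilde g_{n}$ and $\psi_{n}\tilde f$ are asymptotic, so $g_{n} := p(\tilde g_{n}) \in W^{s}(f)$ and $g_{n} \to h$ in $GS$, as required.

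The main obstacle is the first step: obtaining a clean density statement for individual $\Gamma$-orbits on $\partial \widetilde S$ from the material available. Propositions \ref{dense1}--\ref{dense3} record density of \emph{certain} sets (fixed points, a single orbit, fixed-point pairs), not of an arbitrary orbit, so one must either invoke the duality condition proved just above together with the convergence dynamics of hyperbolic group actions on their boundaries, or give a short direct argument from the density of $F_{h}$. Once this is in place, the rest of the argument is a routine application of $\mathrm{CAT}(0)$ continuity and the definition of $W^{s}$.
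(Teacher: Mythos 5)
There is a genuine gap, and it sits exactly at the feature of these surfaces that the whole paper is organized around: flat strips. Your key step is the claim that the geodesics $\widetilde{g}_{n}$ joining the fixed endpoint $\eta=\widetilde{h}(-\infty)$ to $\psi_{n}(\xi)\to\zeta=\widetilde{h}(+\infty)$ converge to $\widetilde{h}$ by ``continuous dependence of geodesics on their endpoints.'' In a CAT(0) space this continuity holds only where the limiting geodesic is \emph{unique}, and here geodesics from $\eta$ to $\zeta$ need not be unique: if $h$ is a closed geodesic lying in the interior of a flat cylinder of $S$, then $\widetilde{h}$ is an interior line of a maximal flat strip $F\cong[0,w]\times\mathbb{R}$ whose lines all join $\eta$ to $\zeta$. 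For such $\widetilde{h}$ your construction cannot work: since $\psi_{n}(\xi)\neq\zeta$, the function $t\mapsto d\bigl(\widetilde{g}_{n}(t),F\bigr)$ is convex, bounded as $t\to-\infty$ and tends to $+\infty$ as $t\to+\infty$, so $\widetilde{g}_{n}\cap F$ is a ray ending at $\eta$; inside the Euclidean strip a geodesic ray going out the $\eta$-end must be ``vertical,'' and a vertical line through an interior point of $F$ never exits the interior --- contradicting $\widetilde{g}_{n}(+\infty)\neq\zeta$. Hence each $\widetilde{g}_{n}$ meets $F$ only along its two boundary lines, and every subsequential limit is a \emph{boundary} geodesic of the maximal strip, never the interior geodesic $\widetilde{h}$. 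So your sequence misses precisely the non-unique closed geodesics, and the proposition is asserted for all $h\in GS$.

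The paper avoids this by a two-step approximation. It first proves the duality condition and invokes Theorem \ref{orbit} to get a geodesic $\gamma$ whose flow-orbit is dense in $GS$; the target $g$ is approximated by translates $\widetilde{\gamma}_{n}=\phi_{n}(t_{n}\cdot\widetilde{\gamma})$. Because $\gamma$ has dense orbit it is non-closed, so by Theorem \ref{no_strips} no lift of it lies in a flat strip and each $\widetilde{\gamma}_{n}$ is the \emph{unique} geodesic between its endpoints. Only then is the forward endpoint perturbed into the dense orbit $\Gamma\cdot\widetilde{f}(+\infty)$, and the endpoint-continuity argument you want is legitimately applied to the unique geodesics $\widetilde{\gamma}_{n}$ rather than to $\widetilde{h}$ itself. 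Your first step (density of every $\Gamma$-orbit in $\partial\widetilde{S}$) is fine and is also used, implicitly, by the paper; to repair your argument you would either have to reinstate an intermediate family of unique geodesics as the paper does, or else perturb \emph{both} endpoints $(\eta,\zeta)$ simultaneously at controlled rates so as to reach interior lines of flat strips --- neither of which is a routine addition.
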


\begin{proof}
Let $g\in GX$ be arbitrary and pick lifts $\widetilde{g}\in G\widetilde{X}$ of
$g$ and $\widetilde{f}\in G\widetilde{X}$ of $f.$ Theorem \ref{orbit} provides
a geodesic $\widetilde{\gamma}\in G\widetilde{X}$ and sequences $\{t_{n}\}$ in
$\mathbb{R}$ and $\left \{  \phi_{n}\right \}  $ in $\Gamma$ such that $\phi
_{n}\left(  t_{n}\cdot \widetilde{\gamma}\right)  \rightarrow \widetilde{g}.$
Set $\widetilde{\gamma}_{n}:=\phi_{n}\left(  t_{n}\cdot \widetilde{\gamma
}\right)  .$

Since the orbit $\Gamma \cdot \widetilde{f}\left(  +\infty \right)  $ is dense in
$\partial \widetilde{X}$ we may pick, for each fixed $n$, a sequence $\left \{
\phi_{n,k}\right \}  _{k=1}^{\infty}\subset \Gamma$ such that $\phi_{n,k}\left(
\widetilde{f}\left(  +\infty \right)  \right)  \rightarrow \widetilde{\gamma
}_{n}\left(  +\infty \right)  .$ For all $k,$ consider geodesics $\widetilde
{g}_{n,k}$ with $\widetilde{g}_{n,k}\left(  +\infty \right)  =\phi_{n,k}\left(
\widetilde{f}\left(  +\infty \right)  \right)  $ and $\widetilde{g}%
_{n,k}\left(  -\infty \right)  =\widetilde{\gamma}_{n}\left(  -\infty \right)
.$ Clearly $g_{n,k}=p\left(  \widetilde{g}_{n,k}\right)  \in W^{s}\left(
f\right)  $ for all $k,n\in \mathbb{N}$ and by a diagonal argument we obtain a
sequence $g_{n,k(n)}=p\left(  \widetilde{g}_{n,k(n)}\right)  $ which, up to
appropriate parametrization, converges to $g=p\left(  \widetilde{g}\right)  .$
\end{proof}

\subsection{Proof of topological mixing\label{sectionmixing}}

For the proof of topological mixing for the geodesic flow on $X,$ we will
closely follow the notation and the analogous proof for CAT(-1) spaces in
\cite{[CT3]} which, in fact, follows the steps of Eberlein's work (cf
\cite{[Ebe2]}). For the proofs of the following Lemmata, we will refer to the
corresponding proofs in \cite{[CT3]} and deal only with the issues arising
from the non-unique closed geodesics.

\begin{lemma}
\label{contofsets} (a) For any $g\in GX$ and $t\in \mathbb{R},$ $\overline
{W^{ss}\left(  t\cdot g\right)  }=t\cdot \left(  \overline{W^{ss}\left(
g\right)  }\right)  .$

(b) Let $g,h\in GX$ with $h\in W^{ss}\left(  g\right)  $ and $\mathcal{O}%
\subset GX$ an open neighborhood of $h.$ Then there exists an open
neighborhood $\mathcal{A}$ containing $g$ such that for any $g_{1}%
\in \mathcal{A},$ $W^{ss}\left(  g_{1}\right)  \cap \mathcal{O}\neq \emptyset.$

(c) If $h\in \overline{W^{ss}\left(  g\right)  },$ then $\overline
{W^{ss}\left(  h\right)  }\subset \overline{W^{ss}\left(  g\right)  }.$
\end{lemma}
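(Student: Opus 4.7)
Part (a) is formal. Using Lemma \ref{busman}(b) and the cocycle identity for the generalized Busemann function, I would verify directly that $\alpha(g(\infty), g(t), h(t)) = \alpha(g(\infty), g(0), h(0))$, so that $h \in W^{ss}(g) \iff t \cdot h \in W^{ss}(t \cdot g)$. This gives $W^{ss}(t \cdot g) = t \cdot W^{ss}(g)$, and taking closures commutes with the continuous translation $g \mapsto t \cdot g$ on $GS$.

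For part (b), I would work in the universal cover. Fix lifts $\widetilde{g}$ of $g$ and $\widetilde{h} \in W^{ss}(\widetilde{g})$ of $h$, and pick a neighborhood $\widetilde{\mathcal{O}}$ of $\widetilde{h}$ in $G\widetilde{S}$ projecting into $\mathcal{O}$. The candidate neighborhood $\widetilde{\mathcal{A}}$ will be a small ball around $\widetilde{g}$ in $G\widetilde{S}$. Given $\widetilde{g}' \in \widetilde{\mathcal{A}}$, I construct $\widetilde{h}' \in W^{ss}(\widetilde{g}')$ near $\widetilde{h}$ in two steps: by continuity of $\alpha$ in its arguments (Lemma \ref{busman}(a)) one may pick a point $\widetilde{x}$ on the horocycle $\{z : \alpha(\widetilde{g}'(\infty), \widetilde{g}'(0), z) = 0\}$ at arbitrarily small distance from $\widetilde{h}(0)$; then $\widetilde{h}'$ is the geodesic line through $\widetilde{x}$ asymptotic to $\widetilde{g}'(\infty)$, whose forward ray exists uniquely by Proposition \ref{existence rays, lnes, uniq} and whose backward extension is determined by the same uniqueness away from flat-strip situations. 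By construction $\widetilde{h}' \in W^{ss}(\widetilde{g}')$, and continuous dependence of geodesic rays on their endpoint at infinity and their basepoint places $\widetilde{h}'$ inside $\widetilde{\mathcal{O}}$.

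For part (c) I would exploit the transitivity of $W^{ss}$: if $h_n \in W^{ss}(g)$, the Busemann cocycle immediately gives $W^{ss}(h_n) = W^{ss}(g)$. Given $k \in \overline{W^{ss}(h)}$ and a neighborhood $\mathcal{O} \ni k$, first select $\widehat{k} \in W^{ss}(h) \cap \mathcal{O}$, then approximate $\widehat{k}$ by elements of $W^{ss}(g)$ as follows. Fix $h_n \in W^{ss}(g)$ with $h_n \to h$ in $GS$, and for each $n$ rerun the construction of (b) with $\widehat{k}$ in the role of the target: working in lifts, find a point on the horocycle of $\widetilde{h}_n$ close to a lift $\widetilde{k}'$ of $\widehat{k}$ at time $0$, and let $\widetilde{k}'_n$ issue from that point toward $\widetilde{h}_n(\infty)$. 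Since $W^{ss}(h_n) = W^{ss}(g)$, the projections $\widehat{k}_n$ lie in $W^{ss}(g)$, and $\widehat{k}_n \to \widehat{k}$ places them in $\mathcal{O}$ eventually, giving $k \in \overline{W^{ss}(g)}$.

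The main obstacle throughout is controlling the continuity of this horocycle-plus-geodesic construction, which in general CAT(0) can fail in the presence of flat strips (closed non-unique geodesics) --- the very pathology that forced the restriction in Proposition \ref{asy}. The hypothesis that $g$ is closed and unique keeps $\widetilde{g}$ off any such flat strip, and Lemma \ref{newLemma} forces every element of $W^{ss}(\widetilde{g})$ to either equal $\widetilde{g}$ up to reparametrization or be a non-closed geodesic eventually coinciding with $\widetilde{g}$; in both cases Proposition \ref{existence rays, lnes, uniq} supplies the uniqueness, and hence the continuity, that the constructions in (b) and (c) demand.
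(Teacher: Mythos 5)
Part (a) of your proposal is correct and is essentially the paper's argument (the paper does it with sequences, you do it with the cocycle identity; both are immediate). The gap is in part (b), and your part (c) inherits it. Your candidate $\widetilde{h}'$ is ``the geodesic line through $\widetilde{x}$ asymptotic to $\widetilde{g}'(\infty)$,'' but no such canonical object exists: Proposition \ref{existence rays, lnes, uniq} gives uniqueness of the \emph{forward ray} from $\widetilde{x}$ to $\widetilde{g}'(\infty)$, while a ray admits many backward extensions to a geodesic line (at a conical point of angle $\theta>2\pi$ there is a whole interval of admissible continuation directions, and nothing forces the backward endpoint at infinity). Worse, even after choosing an extension you have no control of $\widetilde{h}'$ on $[-K,0]$, and ``continuous dependence'' cannot supply it: on these surfaces geodesics do \emph{not} depend continuously on initial data near a geodesic through a conical point. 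If $\widetilde{h}$ passes through a cone point of excess angle $\delta$, a perturbed geodesic slipping past that point on the wide side converges, as the perturbation shrinks, to a \emph{different} continuation deviating from $\widetilde{h}$ by the fixed angle $\delta$; over a compact window $[-K,0]$ this produces an error of order $K\delta$ that does not go to zero. Since $\widetilde{h}|_{(-\infty,0]}$ may contain conical points, your construction does not place $\widetilde{h}'$ in $\widetilde{\mathcal{O}}$. This is precisely the difficulty the hypothesis ``$g$ closed and unique'' exists to defeat, and your argument never exploits it where it matters. The paper's proof shows (Claims 1--3) that a closed unique $g$ must carry conical points with $\theta_{L}>\pi$ and conical points with $\theta_{R}>\pi$, and that any $g'$ which is $\varepsilon_{0}$-close to $g$ over three periods is \emph{forced through} those points, so that $\operatorname{Im}g'$ contains the whole segment $g\left(\left[-T_{g},T_{g}\right]\right)$. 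One then splices: $h'=h$ on $(-\infty,r_{2}]$ and $h'$ a reparametrization of $g'$ on $[r_{2},+\infty)$, the junction lying on a segment shared by $h$, $g$ and $g'$ (using Lemma \ref{newLemma} to get $h=g$ eventually), so the concatenation is geodesic and the backward half needs no estimate at all. Some argument of this forcing/splicing type is indispensable.

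For (c), you propose to ``rerun the construction of (b)'' with the non-closed geodesics $h$ and $h_{n}$ in the role of the base geodesic; besides resting on the flawed construction above, this applies a statement the paper only establishes for closed unique base geodesics, so the reduction itself would need justification even if (b) were repaired.
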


\begin{proof}
(a) If $h\in \overline{W^{ss}\left(  t\cdot g\right)  }$, there exist a
sequence $\left \{  h_{n}\right \}  _{n\in \mathbb{N}}\subset W^{ss}\left(
t\cdot g\right)  $ with $h_{n}\rightarrow h.$ It is clear from the definitions
that $\left(  -t\right)  \cdot h_{n}\rightarrow \left(  -t\right)  \cdot h$ and
$\left \{  \left(  -t\right)  \cdot h_{n}\right \}  _{n\in \mathbb{N}}\subset
W^{ss}\left(  g\right)  .$ This shows that $\left(  -t\right)  \cdot
h\in \overline{W^{ss}\left(  g\right)  }$ and, hence, $h=t\cdot \bigl(\left(
-t\right)  \cdot h\bigr)\in t\cdot \left(  \overline{W^{ss}\left(  g\right)
}\right)  .$ For the converse inclusion, let $h\in t\cdot \left(
\overline{W^{ss}\left(  g\right)  }\right)  .$ This means that there exist a
sequence $\left \{  h_{n}\right \}  _{n\in \mathbb{N}}\subset W^{ss}\left(
g\right)  $ with $t\cdot h_{n}\rightarrow h.$ Clearly, $t\cdot h_{n}\in
W^{ss}\left(  t\cdot g\right)  $ hence $h\in \overline{W^{ss}\left(  t\cdot
g\right)  }.$

(b) Lift $g$ and $h$ to geodesics $\widetilde{g}$ and $\widetilde{h}$ in
$G\widetilde{X}$ such that $\widetilde{h}\in W^{ss}\left(  \widetilde
{g}\right)  $ and consider an open neighborhood $\widetilde{\mathcal{O}%
}\subset G\widetilde{X}$ of $\widetilde{h}$ such that $p\left(  \widetilde
{\mathcal{O}}\right)  \subset \mathcal{O}.$ We will show that there exists an
open neighborhood $\widetilde{\mathcal{A}}$ containing $\widetilde{g}$ such
that for any $\widetilde{g}_{1}\in \widetilde{\mathcal{A}},$ $W^{ss}\left(
\widetilde{g}_{1}\right)  \cap \widetilde{\mathcal{O}}\neq \emptyset.$ Then
$\mathcal{A}=p\left(  \widetilde{\mathcal{A}}\right)  $ would be the desired
neighborhood of $g=p\left(  \widetilde{g}\right)  .$

We may assume that $\widetilde{\mathcal{O}}$ is of the form
\[
\widetilde{\mathcal{O}}\left(  \widetilde{h},K,\varepsilon \right)  =\left \{
\widetilde{f}\in G\widetilde{X}\bigm \vert d\left(  \widetilde{f}\left(
t\right)  ,\widetilde{h}\left(  t\right)  \right)  <\varepsilon
\mathrm{\ for\ all\ }t\in \left[  -K,K\right]  \right \}  .
\]
Consider the open neighborhood
\[
\widetilde{\mathcal{O}}_{R}=\left \{  r\in R\widetilde{X}\bigm \vert
r=\widetilde{f}|_{\left[  -K,\infty \right)  }\mathrm{\ for\ some\ }%
\widetilde{f}\in \widetilde{\mathcal{O}}\right \}
\]
of $R\widetilde{X}.$ Clearly, $\alpha \left(  \xi,\widetilde{h}\left(
0\right)  ,\widetilde{g}\left(  0\right)  \right)  =0$ where $\xi
=\widetilde{g}\left(  +\infty \right)  =\widetilde{h}\left(  +\infty \right)  .$
By Proposition \ref{rvfunction} we may choose open sets $A$ and $B$ of
$\widetilde{X}$ and $\partial \widetilde{X}$ respectively, such that $\left(
\widetilde{h}\left(  -K\right)  ,\xi \right)  \in A\times B$ and $\varrho
^{-1}\left(  A\times B\right)  \subset \widetilde{\mathcal{O}}_{R}$ where
$\varrho$ is the function produced in Proposition \ref{rvfunction}. By Lemma
\ref{eberlein210}, we may choose open sets $C$ and $D$ of $\widetilde{X}$ and
$\partial \widetilde{X}$ respectively, such that $\left(  \widetilde{g}\left(
-K\right)  ,\xi \right)  \in C\times D$ and for every geodesic ray
$r_{g}:\left[  -K,\infty \right)  \rightarrow \widetilde{X}$ with $r_{g}\left(
-K\right)  \in C$ and $r_{g}\left(  +\infty \right)  \in D,$ there exists a
geodesic ray $r_{h}:\left[  -K,\infty \right)  \rightarrow \widetilde{X}$ with
$r_{h}\left(  -K\right)  \in A,$ $r_{g}\left(  +\infty \right)  =r_{h}\left(
+\infty \right)  $ and $\alpha \left(  r_{g}\left(  +\infty \right)
,r_{h}\left(  -K\right)  ,r_{g}\left(  -K\right)  \right)  =0$, in other
words, $r_{h}\in W^{ss}\left(  r_{g}\right)  .$

The inverse image $\varrho^{-1}\left(  C\times \left(  B\cap D\right)  \right)
$ is an open neighborhood in $R\widetilde{X}$ containing $\widetilde
{g}|_{\left[  -K,\infty \right)  }.$ Extend all geodesic rays in $\varrho
^{-1}\left(  C\times \left(  B\cap D\right)  \right)  $ to geodesic lines in
order to obtain $\widetilde{\mathcal{A}}\subset G\widetilde{X}$ containing
$\widetilde{g}.$

Every geodesic $\widetilde{g}_{1}\in \widetilde{\mathcal{A}}$ determines a
geodesic ray $r_{g_{1}}\in \varrho^{-1}\left(  C\times \left(  B\cap D\right)
\right)  $ for which we have shown that there exists a geodesic ray $r_{h_{1}%
}\in \varrho^{-1}\left(  A\times B\right)  .$ Extending $r_{h_{1}}$ to a
geodesic line we obtain a geodesic $\widetilde{h}_{1}\in \widetilde
{\mathcal{O}}$ with $\widetilde{h}_{1}\in W^{ss}\left(  \widetilde{g}%
_{1}\right)  ,$ thus $W^{ss}\left(  \widetilde{g}_{1}\right)  \cap
\widetilde{\mathcal{O}}\neq \emptyset.$

(c) Let $g^{\ast}$ be an arbitrary element in $\overline{W^{ss}\left(
h\right)  }$ and $\mathcal{O}$ an arbitrary open neighborhood of $g^{\ast}.$
We will show that $g^{\ast}\in \overline{W^{ss}\left(  g\right)  }.$ Since
$W^{ss}\left(  h\right)  \cap \mathcal{O}\neq \emptyset$ we may choose, by part
(b), an open neighborhood $\mathcal{A}$ of $h$ such that, for every
$f\in \mathcal{A},$ $W^{ss}\left(  f\right)  \cap \mathcal{O}\neq \emptyset.$
Since $h\in \overline{W^{ss}\left(  g\right)  },$ there exists $g_{1}\in
W^{ss}\left(  g\right)  \cap \mathcal{A}$ and, thus, $W^{ss}\left(
g_{1}\right)  \cap \mathcal{O}=W^{ss}\left(  g\right)  \cap \mathcal{O}%
\neq \emptyset.$ Since $\mathcal{O}$ was arbitrary, $g^{\ast}\in \overline
{W^{ss}\left(  g\right)  }$ as required.
\end{proof}

\begin{lemma}
\label{karafinalL}Let $f\in G\widetilde{X}$ be unique, that is, either $p(f)$
is non-closed or, $p\left(  f\right)  \in GX$ is closed and unique in its
homotopy class, and $\mathcal{O}$ a neighborhood of $f.$ \newline(a) There
exists a neighborhood $O$ in $\partial \widetilde{X}$ of $f\left(
+\infty \right)  $ such that for any geodesic $g$ with $g\left(  +\infty
\right)  \in O$ and $g\left(  -\infty \right)  =f\left(  -\infty \right)  $
there exists a re-parametrization $\overline{g}$ of $g$ with $\overline{g}%
\in \mathcal{O}.$\newline(b) If $\left \{  \xi_{n}\right \}  $ is a sequence in
$\partial \widetilde{X}$ with $\xi_{n}\rightarrow f\left(  +\infty \right)  $
and $\left \{  f_{n}\right \}  $ a sequence of geodesics with $f_{n}\left(
+\infty \right)  =\xi_{n}$ and $f_{n}\left(  -\infty \right)  =f\left(
-\infty \right)  ,$ we may re-parametrize $\left \{  f_{n}\right \}  $ such that
$f_{n}\rightarrow f.$
\end{lemma}

\begin{proof}
(a) The proof of part (a) follows from (b). To see this, assume the result
does not hold. Then for a decreasing sequence of open neighborhoods
$O_{n}\searrow f\left(  +\infty \right)  $ there must exist $\xi_{n}\in O_{n}$
such that any geodesic $f_{n}$ with $f_{n}\left(  +\infty \right)  =\xi_{n}$
and $f_{n}\left(  -\infty \right)  =f\left(  -\infty \right)  $ has the property
$f_{n}\notin \mathcal{O}.$ In particular $\left \{  f_{n}\right \}  $ does not
converge to $f$ contradicting (b).

(b) This is a special case of Proposition \ref{bakis}.
\end{proof}

\begin{proposition}
\label{ssclosure_somegeodesic}There exists a geodesic $g\in GX$ such that
$\overline{W^{ss}\left(  g\right)  }=GX.$
\end{proposition}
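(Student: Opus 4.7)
The plan is to choose $c\in GS$ to be a closed geodesic that is unique in its free homotopy class, which exists by Lemma \ref{unique_closed}, and to fix a lift $\widetilde c\in G\widetilde S$. I will prove $\overline{W^{ss}(c)}=GS$, so that taking $g=c$ establishes the proposition.

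Given an arbitrary $f\in GS$ and a basic open neighborhood $\mathcal{O}=\{h\in GS:d(h(t),f(t))<\varepsilon\text{ for all }t\in[-K,K]\}$ of $f$, the construction proceeds in two steps. By Proposition \ref{all_stable}, $\overline{W^s(c)}=GS$, so one finds $h\in W^s(c)$ arbitrarily close to $f$; fix a lift $\widetilde h$ of $h$ that is close on $[-K,K]$ to a fixed lift $\widetilde f$ of $f$, so that its forward endpoint equals $\phi\widetilde c(+\infty)$ for some $\phi\in\Gamma$ with $\phi\widetilde c(+\infty)$ close to $\xi:=\widetilde f(+\infty)$ (such lifts exist because $\Gamma\widetilde c(+\infty)$ is dense in $\partial\widetilde S$ by Proposition \ref{dense1}). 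By Lemma \ref{busman}(c) there is a unique $r\in\mathbb{R}$ such that $\overline h:=r\cdot h$ lies in $W^{ss}(c)$, and a direct cocycle computation using the $\Gamma$-invariance of the Busemann function gives
\[
r=\alpha\bigl(\phi\widetilde c(+\infty),\,\phi(\widetilde c(0)),\,\widetilde h(0)\bigr).
\]

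If $|r|$ can be made small, then $\overline h\approx h$ stays close to $f$ and lies in $W^{ss}(c)\cap\mathcal{O}$, completing the proof. Two sources of flexibility for arranging small $|r|$ are available: varying $\phi$ subject to $\phi\widetilde c(+\infty)\approx\xi$, and exploiting the $T_c$-periodicity of $c$ via the hyperbolic isometry $\phi_c\in\Gamma$ associated to $c$; indeed, replacing $\phi$ by $\phi\phi_c^k$ fixes $\phi\widetilde c(+\infty)$ but shifts $r$ by exactly $kT_c$, so $r$ is essentially determined modulo $T_c$ by the coset $\phi\langle\phi_c\rangle$. The main obstacle is to show that, as $\phi$ ranges over cosets with $\phi\widetilde c(+\infty)\to\xi$, the residues $r\bmod T_c$ form a dense subset of $[0,T_c)$; this density is what one must extract from Proposition \ref{dense2} (density of $\Gamma$-orbits on $\partial\widetilde S\times\partial\widetilde S$) combined with Lemma \ref{contofsets}(b), which provides precisely the local flexibility of $W^{ss}$ near the closed unique geodesic $c$ that compensates for the flat-strip obstructions inherent in the CAT(0) setting. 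Once the residue density is in hand, one selects $\phi$ to make $r\bmod T_c$ as small as desired and then chooses $k\in\mathbb{Z}$ to cancel the integer-$T_c$ part, yielding $\overline h\in W^{ss}(c)\cap\mathcal{O}$.
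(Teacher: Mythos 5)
Your proposal aims directly at the stronger statement $\overline{W^{ss}(c)}=GS$ for a closed geodesic $c$ unique in its homotopy class (which is the content of Proposition \ref{ssclosure_closed}, proved separately later in the paper), rather than at the existence of \emph{some} $g$ with dense strong stable set. Your reduction to controlling the reparametrization $r$ modulo the period $T_c$ is correctly set up, but the argument breaks down exactly where you flag it: the density of the residues $r\bmod T_c$ in $[0,T_c)$ is announced as ``the main obstacle'' and never established. It does not follow from the results you cite: Proposition \ref{dense2} gives density of a $\Gamma$-orbit in $\partial\widetilde{S}\times\partial\widetilde{S}$, which controls the endpoints $\phi\widetilde{c}(\pm\infty)$ but says nothing about the values of the Busemann cocycle $\alpha\bigl(\phi\widetilde{c}(+\infty),\phi(\widetilde{c}(0)),\widetilde{h}(0)\bigr)$ modulo $T_c$; and Lemma \ref{contofsets}(b) is a continuity statement for $W^{ss}$ near the fixed closed geodesic, which produces no new residues. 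A density statement of this kind amounts to a non-arithmeticity property of the cocycle and is of essentially the same depth as topological mixing itself, so assuming it here comes close to begging the question.

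The paper sidesteps this issue entirely with a two-step structure. First (the present proposition, following Proposition 4.1 of \cite{[CT3]}) one proves that for any open $\mathcal{O},\mathcal{U}\subseteq GS$ there exists $g\in\mathcal{O}$ with $W^{ss}(g)\cap\mathcal{U}\neq\emptyset$; the endpoints are chosen using Proposition \ref{dense3}, and Lemma \ref{newLemma} guarantees all geodesics involved can be taken non-closed and unique so that the $CAT(-1)$ approximation arguments go through. This yields one $g$ with $\overline{W^{ss}(g)}=GS$. Only afterwards (Proposition \ref{ssclosure_closed}) is this transferred to $c$: one finds $t_n\cdot g_n\in W^{ss}(c)$ converging to $t\cdot g$ where $t$ is merely known to lie in $[0,\omega]$, and no control of $t$ is needed because $\overline{W^{ss}(t\cdot g)}=t\cdot\overline{W^{ss}(g)}=GS$, so Lemma \ref{contofsets}(c) gives $\overline{W^{ss}(c)}\supseteq GS$. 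In other words, the unknown time-shift is absorbed by the translation invariance of the conclusion --- precisely the mechanism your direct approach lacks. To repair your proof you would either need to genuinely establish the residue density, or restructure it along the paper's two-step lines.
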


\begin{proof}
We will follow the line of proof of Proposition 4.1 in \cite{[CT3]}. In that
setup geodesic lines are uniquely determined by their boundary points, so
Conditions (u) and (c) stated in the beginning of the Introduction hold. The
modification will consist of the following: every geodesic which comes into
play will be replaced by a (unique) geodesic whose limit point belongs to
$\partial \widetilde{X} \setminus F_{h}^{nu}$ so that conditions (U) and (C)
can be applied.

To prove the Proposition, it suffices to show that
\begin{equation}%
\begin{aligned}
%\begin{multline*}
\mathrm{for}\, \, \mathrm{any}\, \, \mathrm{open}\, \, \mathcal{O}\, \,
\mathrm{and}\, \, \, \mathcal{U}  &  \subseteq GX\,,\, \\
\mathrm{there}\, \, \mathrm{exists}\, \, \,g  &  \in \mathcal{O}%
\mathrm{\ such\ that\ }W^{ss}\left(  g\right)  \cap \mathcal{U}\neq \emptyset.
%\end{multline*}
\end{aligned}
\label{basicoldstep}%
\end{equation}
Then, using a countable basis $\left \{  \mathcal{O}_{n}\right \}
_{n\in \mathbb{N}}$ for the topology of $GX$ the proof is completed by a
standard topological argument (cf. \cite[Theorem 5.2]{[Ebe2]}).

Let $\mathcal{O},\mathcal{U}\subseteq GX\,$ be arbitrary open sets. Pick $f\in
p^{-1}\left(  \mathcal{O}\right)  $ such that $f$ is non-closed. Similarly,
choose $h\in p^{-1}\left(  \mathcal{U}\right)  $ such that $h$ is not closed.

By condition (U) $f$ (resp. $h$) is unique, thus, by Lemma \ref{karafinalL}%
(a), there exists connected open neighborhood $O_{f}\subset \partial
\widetilde{X}$ of $f\left(  +\infty \right)  $ (resp. $U_{h}$ of $h\left(
+\infty \right)  )$ such that for every $\xi \in O_{f}$ (resp. $\xi \in U_{h}$)
there exists a geodesic with boundary points $\xi,f\left(  -\infty \right)  $
(resp. $\xi,h\left(  -\infty \right)  $) which belongs to $p^{-1}\left(
\mathcal{O}\right)  $ (resp. $p^{-1}\left(  \mathcal{U}\right)  ).$

By condition (D), there exists a closed and unique geodesic $\beta$ such that
\begin{equation}
\left \{  \beta \left(  +\infty \right)  ,\beta \left(  -\infty \right)  \right \}
\subset F_{h}^{u} \label{superkarafinal}%
\end{equation}
and
\[
\left(  \beta \left(  +\infty \right)  ,\beta \left(  -\infty \right)  \right)
\in O_{f}\times U_{h}.
\]
By the choice of $O_{f}$ (Lemma \ref{karafinalL}(a)) there exists a geodesic
joining $\beta \left(  +\infty \right)  $ and $f\left(  -\infty \right)  $ which
belongs to $p^{-1}\left(  \mathcal{O}\right)  $ which by property
(\ref{superkarafinal}) is unique. Replace $f$ by this geodesic and, thus, we
may assume that $f\left(  +\infty \right)  =\beta \left(  +\infty \right)  .$
Similarly we arrange so that $h\left(  +\infty \right)  =\beta \left(
-\infty \right)  .$ Denote by $\phi$ the hyperbolic isometry corresponding to
$\beta.$

For each $n,$ by extending the geodesic segment joining 
$f\left(  0\right)$ with $\phi^{n}\left(  h\left( 0\right)  \right) $
to a geodesic line, it follows that the function 
$\alpha \bigl(\cdot ,f\left(  0\right)  ,\phi^{n}\left(  h\left(
0\right)  \right)  \bigr )$ 
attains positive and negative values on $\partial\widetilde{X} .$ 
As $\partial\widetilde{X} $ is assumed to be connected, there exists $\xi_{n}$ 
in $\partial \widetilde{X}$ such that
$\alpha \bigl(\xi_{n},f\left(  0\right)  ,\phi^{n}\left(  h\left(  0\right)
\right)  \bigr
)=0.$ We claim that
\begin{equation}
\xi_{n}\rightarrow f\left(  +\infty \right)  \, \,as\, \,n\rightarrow
\infty.\label{xn}%
\end{equation}
To see this assume, on the contrary, that $\left \{  \xi_{n}\right \}  $ (or, a
subsequence of it) converges to $\xi \in \partial X$ with $\xi \neq f\left(
+\infty \right)  .$ Let $M$ be a positive real number. 
For each fixed $n,$ using equation (\ref{generalizeddd}) and the fact that 
$\xi_{n}$ is chosen so that
$\phi^{n}\left(  h\left(  0\right)  \right)  $ and $f\left(  0\right)  $ are
equidistant from $\xi_{n},$ we may pick a sequence $\left \{
x^n_{m}\right \}  _{m\in \mathbb{N}}$ with the property $x^n_{m}\rightarrow
\xi_{n}$ as $m\rightarrow \infty$ and 
\[
\left \vert \alpha \bigl(x^n_{m},f\left(  0\right)  ,\phi^{n}\left(  h\left(
0\right)  \right)  \bigr )\right \vert \leq M,\, \, \, \, \mathrm{for}%
\, \, \mathrm{all}\, \, \,m\, \mathrm{large}\, \mathrm{enough.}%
\]
with $M>0$ being independent of $n.$ 
It is well known (see, for example, \cite[Ch. I, \S 4]{[Coo]}) that 
$\widetilde{X} \cup\partial \widetilde{X}$ is metrizable, hence, 
by a diagonal argument we obtain a
sequence $\left \{  x^n_{m\left(  n\right)}\right \}  _{n\in \mathbb{N}}$ such
that $x^n_{m\left(  n\right) }\rightarrow \xi$ as $n\rightarrow \infty$ and
\begin{equation}
\left \vert \alpha \bigl(x^n_{m\left(  n\right) },f\left(  0\right)  ,\phi
^{n}\left(  h\left(  0\right)  \right)  \bigr )\right \vert \leq M%
,\, \, \, \, \mathrm{for}\, \, \mathrm{all}\, \, \,n.\label{duoStar}%
\end{equation}
For the hyperbolic product of the sequences  $\left \{  x^n_{m\left(  n\right)}\right \}  _{n\in \mathbb{N}}$ and $\left \{ \phi^{n}\left(  h\left(  0\right) \right) \right \}  _{n\in \mathbb{N}}$ with 
base point $f(0)$ we have 
\begin{eqnarray*}
2\left(  x^n_{m\left(  n\right)} ,  \phi^{n}\left(  h\left(  0\right) \right) \right)_{f(0)} 
& = &
 d\left( f(0), x^n_{m\left(  n\right)} \right) 
+ d\left( f(0),  \phi^{n}\left(  h\left(  0\right) \right) \right)-
\\ 
 & &  \hskip38mm
 - d\left(   \phi^{n}\left(  h\left(  0\right) \right) , x^n_{m\left(  n\right)} \right)
 \\
 & = & - \alpha \left(x^n_{m\left(  n\right) },f\left(  0\right)  ,\phi
^{n}\left(  h\left(  0\right)  \right)  \right) + 
d\bigl (  f(0),  \phi^{n}\left(  h\left(  0\right) \right) \bigr)
\end{eqnarray*}
It follows by (\ref{duoStar}) that $\left(  x^n_{m\left(  n\right)} ,  \phi^{n}\left(  h\left(  0\right) \right) \right)_{f(0)} \rightarrow \infty$ as $n\rightarrow \infty,$ hence,  
the sequences $\left \{  x^n_{m\left(  n\right)}\right \}  _{n\in \mathbb{N}}$ and $\left \{ \phi^{n}\left(  h\left(  0\right) \right) \right \}  _{n\in \mathbb{N}}$ define the same point
at the boundary. This is a contradiction,  since  $\phi^{n}\left(  h\left(  0\right)  \right)  \rightarrow \beta \left( +\infty \right) = f\left( +\infty \right)$ and 
$\left \{  x^n_{m\left(  n\right)}\right \}  _{n\in \mathbb{N}} \rightarrow \xi .$

Thus equation (\ref{xn}) is proved. In a similar manner we show that
\begin{equation}
\phi^{-n}\left(  \xi_{n}\right)  \rightarrow h\left(  +\infty \right)
\, \,as\, \,n\rightarrow \infty.\label{yn}%
\end{equation}
Choose now geodesics $f_{n}\in G\widetilde{X},$ $n\in \mathbb{N}$ such that
$f_{n}\left(  +\infty \right)  =\xi_{n}$ and $f_{n}\left(  -\infty \right)
=f\left(  -\infty \right)  $ and by Lemma \ref{karafinalL}(b), we may
parametrize $f_{n}$ so that $f_{n}\rightarrow f$ or, equivalently,
$f_{n}\left(  0\right)  \rightarrow f\left(  0\right)  .$ Similarly, choose
$h_{n}\in GX$ such that $h_{n}\left(  +\infty \right)  =\xi_{n}$ and
$h_{n}\left(  -\infty \right)  =\phi^{n}\bigl
(h\left(  -\infty \right)  \bigr)$ and parametrize them so that
\begin{equation}
\alpha \bigl(\xi_{n},f_{n}\left(  0\right)  ,h_{n}\left(  0\right)
\bigr)=0.\label{finalb}%
\end{equation}
It is apparent that for $n$ large enough, $f_{n}\in p^{-1}\left(
\mathcal{O}\right)  $ and $h_{n}\in W^{ss}\left(  f_{n}\right)  .$ If we show
that $\phi^{-n}\left(  h_{n}\right)  \in p^{-1}\left(  \mathcal{U}\right)  $
for $n$ large enough, then we would have
\[%
\begin{array}
[c]{l}%
p\left(  f_{n}\right)  \in \mathcal{O},\\
p\left(  h_{n}\right)  =p\bigl(\phi^{-n}\left(  h_{n}\right)  \bigr)\in
\mathcal{U},\, \,and\\
p\left(  h_{n}\right)  \in W^{ss}\bigl(p\left(  f_{n}\right)  \bigr)
\end{array}
\]
The above three properties imply that for $n$ large enough, $W^{ss}\bigl
(p\left(  f_{n}\right)  \bigr)\cap \mathcal{U}\neq \emptyset,$ as required in
equation (\ref{basicoldstep}). We conclude the proof of the proposition by
showing that $\phi^{-n}\left(  h_{n}\right)  \in p^{-1}\left(  \mathcal{U}%
\right)  $ for $n$ large enough. In fact we will show that $\phi^{-n}%
h_{n}\rightarrow h.$ Clearly,
\begin{equation}
\bigl(\phi^{-n}\left(  h_{n}\right)  \bigr)\left(  +\infty \right)  =\phi
^{-n}\bigl(h_{n}\left(  \infty \right)  \bigr)=\phi^{-n}\left(  \xi_{n}\right)
\rightarrow h\left(  +\infty \right)  \label{a}%
\end{equation}
and
\begin{equation}
\bigl(\phi^{-n}\left(  h_{n}\right)  \bigr)\left(  -\infty \right)  =\phi
^{-n}\bigl(h_{n}\left(  -\infty \right)  \bigr)=h\left(  -\infty \right)
\label{c}%
\end{equation}
Use equations (\ref{a}), (\ref{c}) to apply Lemma \ref{karafinalL}(b) for the
unique geodesic $h$ to obtain a re-parametrization, say $\overline{h}_{n},$ of
each $\phi^{-n}\left(  h_{n}\right)  $ such that $\overline{h}_{n}\rightarrow
h.$ In particular, we have
\[
d\Bigl(h\left(  0\right)  ,\mathrm{Im}\overline{h}_{n}\Bigr)\rightarrow0
\]
which implies
\[
d\Bigl(h\left(  0\right)  ,\mathrm{Im}\, \phi^{-n}\left(  h_{n}\right)
\Bigr)\rightarrow0
\]
as $n\rightarrow+\infty.$ Therefore,
\begin{equation}
d\Bigl(\phi^{n}\bigl(h\left(  0\right)  \bigr),\mathrm{Im}\,h_{n}%
\Bigr)\rightarrow0\, \,as\, \,n\rightarrow \infty \label{finala}%
\end{equation}
Let $h_{n}\left(  t_{n}\right)  ,t_{n}\in \mathbb{R}$ be the point on
$\mathrm{Im}\,h_{n}$ which realizes the distance in equation (\ref{finala})
above. As the function $\alpha$ is Lipschitz with respect to the third
variable (with Lipschitz constant 1) we have
\[
\left \vert \alpha \bigl(\xi_{n},f\left(  0\right)  ,\phi^{n}\bigl(h\left(
0\right)  \bigr)\bigr)-\alpha \bigl(\xi_{n},f\left(  0\right)  ,h_{n}\left(
t_{n}\right)  \bigr)\right \vert \leq d\Bigl(\phi^{n}\bigl(h\left(  0\right)
\bigr),h_{n}\left(  t_{n}\right)  \Bigr)
\]
Using the defining property of $\xi_{n},$ i.e. $\alpha \bigl(\xi_{n},f\left(
0\right)  ,\phi^{n}\left(  h\left(  0\right)  \right)  \bigr)=0,$ it follows
that
\[
\alpha \bigl(\xi_{n},f\left(  0\right)  ,h_{n}\left(  t_{n}\right)  \bigr
)\rightarrow0\, \,as\, \,n\rightarrow \infty
\]
Similarly, using the fact that $f_{n}\left(  0\right)  \rightarrow f\left(
0\right)  $ as $n\rightarrow \infty$ and the Lipschitz property of $\alpha$
with respect to the second variable we have
\[
\alpha \bigl(\xi_{n},f_{n}\left(  0\right)  ,h_{n}\left(  t_{n}\right)  \bigr
)\rightarrow0\, \,as\, \,n\rightarrow \infty
\]
Since, by lemma \ref{busman}(c), there is a unique point on each
$\mathrm{Im}\,h_{n}$ which is equidistant from $f_{n}\left(  0\right)  $ with
respect to $\xi_{n},$ namely, $h_{n}\left(  0\right)  $ $\bigl($cf. equation
(\ref{finalb})$\bigr),$ it follows that $t_{n}\rightarrow0$ which, combined
with equation (\ref{finala}) implies that
\[
d\Bigl(\phi^{n}\bigl(h\left(  0\right)  \bigr),h_{n}\left(  0\right)
\Bigr)\rightarrow0\, \,as\, \,n\rightarrow \infty.
\]
Therefore, $\phi^{-n}\bigl(h_{n}\left(  0\right)  \bigr)\rightarrow h\left(
0\right)  $ as $n\rightarrow \infty.$ By proposition \ref{bakis} it follows
that $\phi^{-n}h_{n}\rightarrow h$ which implies that $\phi^{-n}\left(
h_{n}\right)  \in p^{-1}\left(  \mathcal{U}\right)  $ concluding the proof.
\end{proof}

\begin{proposition}
\label{ssclosure_closed}For every  closed geodesic $c\in GX ,$  $\overline
{W^{ss}\left(  c\right)  }=GX.$
\end{proposition}

\begin{proof}
Let $g$ be the geodesic produced in Proposition \ref{ssclosure_somegeodesic},
that is, $\overline{W^{ss}\left(  g\right)  }=GX$ and let $c$ be a closed
geodesic. By Proposition \ref{all_stable},
$\overline{W^{s}\left(  c\right)  }=GX$ so that $g\in \overline{W^{s}\left(
c\right)  }=GX.$ Thus, there exists a sequence $\left \{  g_{n}\right \}
\subset W^{s}\left(  c\right)  $ such that $g_{n}\rightarrow g.$ For each
$n\in \mathbb{N},$ consider lifts $\widetilde{g}_{n}$ and $\widetilde{c}$ of
$g_{n}$ and $c$ respectively satisfying $\widetilde{g}_{n}\in W^{s}\left(
\widetilde{c}\right)  $ and use Lemma \ref{busman}(c) to obtain a real number
$\widetilde{t}_{n}$ such that $\widetilde{t}_{n}\cdot g_{n}\in W^{ss}\left(
c\right)  .$ Each $\widetilde{t}_{n}$ may be written as
\[
\widetilde{t}_{n}=k\omega+t_{n}%
\]
where $k\in \mathbb{Z}$ and $t_{n}\in \left[  0,\omega \right)  .$ By choosing,
if necessary a subsequence, $t_{n}\rightarrow t$ for some $t\in \left[
0,\omega \right]  .$ Then $t_{n}\cdot g_{n}\rightarrow t\cdot g$ and
$t_{n}\cdot g_{n}\in W^{ss}\left(  c\right)  $ which simply means that $t\cdot
g\in \overline{W^{ss}\left(  c\right)  }$ and by Lemma \ref{contofsets}(c) we
have $\overline{W^{ss}\left(  c\right)  }\supset \overline{W^{ss}\left(  t\cdot
g\right)  }=\overline{t\cdot W^{ss}\left(  g\right)  }=GX.$
\end{proof}

We will need a point-wise version of topological mixing and a criterion for
such a property.

\begin{definition}
Let $h$ and $f$ be in $GX$ and let $\left \{  s_{n}\right \}  _{n\in \mathbb{N}}$
be a sequence converging to $+\infty$ or $-\infty.$ We say that $h$ is $s_{n}%
$-mixing with $f$ (notation, $h\sim_{s_{n}}f$) if for every neighborhood
$\mathcal{O}$ and $\mathcal{U}$ in $GX$ of $h$ and $f$ respectively,
$s_{n}\cdot \mathcal{O}\cap \mathcal{U}\neq \emptyset$ for all $n$ sufficiently large.
\end{definition}

For a geodesic $h,$ denote by $-h$ the geodesic with the reverse orientation,
that is, $\left(  -h\right)  \left(  t\right)  :=h\left(  -t\right)  ,$
$t\in \mathbb{R}.$ For a neighborhood $\mathcal{O}$ of $h$ denote by
$-\mathcal{O}$ the neighborhood of $-h$ defined by $-\mathcal{O}:=\left \{
-f\bigm \vert f\in \mathcal{O}\right \}  .$

\begin{lemma}
\label{snLemma}Let $\left \{  s_{n}\right \}  _{n\in \mathbb{N}}$ be a sequence
converging to $+\infty$ or $-\infty.$ Then
\[
h\sim_{s_{n}}f\Leftrightarrow f\sim_{-s_{n}}h\Leftrightarrow-f\sim_{s_{n}}-h.
\]

\end{lemma}

\begin{proof}
Let $\mathcal{O}$ and $\mathcal{U}$ in $GX$ be arbitrary neighborhoods of $h$
and $f$ respectively. The assumption $h\sim_{s_{n}}f$ means that for each
$n\in \mathbb{N},$ there exists a geodesic $h_{n}^{\prime}\in \mathcal{O}$ such
that $s_{n}\cdot h_{n}^{\prime}\in \mathcal{U}$ or, equivalently, $\left(
-s_{n}\right)  \cdot \left(  s_{n}\cdot h_{n}^{\prime}\right)  \in \left(
-s_{n}\right)  \cdot \mathcal{U}$. In other words, $h_{n}^{\prime}\in \left(
-s_{n}\right)  \cdot \mathcal{U}$ which implies that $\left(  -s_{n}\right)
\cdot \mathcal{U}\cap \mathcal{O}\neq \emptyset.$ This shows that $f\sim_{-s_{n}%
}h.$ The converse of the first equivalence is trivial as $\left \{  -\left(
-s_{n}\right)  \right \}  =\left \{  s_{n}\right \}  .$

Assuming $f\sim_{-s_{n}}h$ we have, by definition, that $\left(
-s_{n}\right)  \cdot \mathcal{U}\cap \mathcal{O}\neq \emptyset$ for all large
$n.$ Thus, for each large enough $n\in \mathbb{N},$ there exists a geodesic
$f_{n}^{\prime}\in \mathcal{U}$ such that $\left(  -s_{n}\right)  \cdot
f_{n}^{\prime}\in \mathcal{O}$ or, equivalently, $-\left[  \left(
-s_{n}\right)  \cdot f_{n}^{\prime}\right]  \in-\mathcal{O}.$ Since $-\left[
\left(  -s_{n}\right)  \cdot f_{n}^{\prime}\right]  =s_{n}\cdot \left(
-f_{n}^{\prime}\right)  $ we have $s_{n}\cdot \left(  -f_{n}^{\prime}\right)
\in-\mathcal{O}.$ Clearly, $-f_{n}^{\prime}\in-\mathcal{U},$ and thus,
$s_{n}\cdot \left(  -\mathcal{U}\right)  \cap \left(  -\mathcal{O}\right)
\neq \emptyset.$ This shows that $-f\sim_{s_{n}}-h.$ The proof of the converse
of the second equivalence is again trivial as $-\left(  -f\right)  =f.$
\end{proof}

\begin{remark}
\label{snRemark}The first equivalence in the previous Lemma shows that the
$s_{n}$-mixing relation is not a symmetric relation. In particular, it is not
an equivalence relation.
\end{remark}

The following criterion for the $s_{n}$-mixing of $h,f$ holds.

\begin{lemma}
\label{criterion}If $h$ and $f\in GX$, then $h\sim_{s_{n}}f$ if and only if
for each subsequence $\left \{  s_{n}^{\prime}\right \}  $ of $\left \{
s_{n}\right \}  $ there exists a subsequence $\left \{  r_{n}\right \}  $ of
$\left \{  s_{n}^{\prime}\right \}  $ and a sequence of non-closed geodesics
$\left \{  h_{n}\right \}  \subset GX$ such that $h_{n}\rightarrow h,r_{n}\cdot
h_{n}\rightarrow f$ and $\widetilde{h}_{n}\left(  +\infty \right)  \notin
F_{h}^{nu}$ for some, hence any, lift $\widetilde{h}_{n}$ of $h_{n}.$
\end{lemma}

\begin{proof}
If $h\sim_{s_{n}}f$ for some $h$ and $f\in GX,$ then using decreasing
sequences of open neighborhoods of $h$ and $f$ it is easily shown that for
each subsequence $\left \{  s_{n}^{\prime}\right \}  $ of $\left \{
s_{n}\right \}  $ there exists a subsequence $\left \{  r_{n}\right \}  $ of
$\left \{  s_{n}^{\prime}\right \}  $ and a sequence $\left \{  h_{n}\right \}
\subset GX$ such that $h_{n}\rightarrow h$ and $r_{n}\cdot h_{n}\rightarrow
f.$ We proceed to show that we may replace $\left \{  h_{n}\right \}  $ by a
sequence $\left \{  g_{n}\right \}  $ of non-closed geodesics so that
$g_{n}\rightarrow h$ and $r_{n}\cdot g_{n}\rightarrow f.$ \newline Let
$\gamma$ be the geodesic posited in Theorem \ref{orbit}, that is, its orbit
$\mathbb{R}\cdot \gamma$ is dense in $GX.$ As observed at the end of the proof
of Theorem \ref{orbit}, $\gamma$ is non-closed. Thus, there exists a sequence
$\left \{  t_{i}^{1}\right \}  _{i\in \mathbb{N}}$ such that $t_{i}^{1}%
\cdot \gamma \rightarrow h_{1}.$ Set $g_{i}^{1}=t_{i}^{1}\cdot \gamma$ and,
clearly, all $g_{i}^{1}$ are non-closed. Similarly, for each $h_{n},$ we may
find a sequence of non-closed geodesics $g_{i}^{n}=t_{i}^{n}\cdot \gamma$
converging to $h_{n}.$ By a diagonal argument we obtain a sequence of
non-closed geodesics $\left \{  g_{n}\right \}  $ converging to $h$ and,
clearly, $\mathrm{lim}_{n\rightarrow \infty}r_{n}\cdot g_{n}=\mathrm{lim}%
_{n\rightarrow \infty}r_{n}\cdot h_{n}=f.$

By Lemma \ref{denseorbitunc}, the geodesic $\gamma$ having dense orbit can be
chosen so that $\widetilde{\gamma}\left(  +\infty \right)  \notin F_{h}^{nu}.$
Since the non-closed geodesics $\left \{  g_{n}\right \}  $ constructed above
are all translates of $\gamma$ the last requirement of the Lemma is fulfilled.

The proof of the converse statement is elementary.
\end{proof}

\begin{remark}
\label{suggestedrem2}For a geodesic $f\in GX$ and a sequence $s_{n}%
\rightarrow \infty$ the set
\[
\left \{  h\in GX:h\sim_{s_{n}}f\right \}
\]
is a closed set.
\end{remark}

\begin{proof}
Assume $\left \{  h_{k} \right \}  $ is a sequence with $h_{k} \rightarrow h$
and $h_{k} \sim_{s_{n}}f$ for all $k\in \mathbb{N}.$ We show that $h\sim
_{s_{n}}f .$ Let $\mathcal{O}$ and $\mathcal{U}$ be arbitrary neighborhoods of
$h$ and $f$ respectively. Find $k_{0}$ such that $h_{k_{0}} \in \mathcal{O} . $
Then, as $h_{k_{0}} \sim_{s_{n}}f ,$ we have $s_{n}\cdot \mathcal{O}%
\cap \mathcal{U}\neq \emptyset$ for all $n$ sufficiently large. The latter
means, by definition, that $h\sim_{s_{n}}f .$
\end{proof}

We next show the following lemma which asserts that point-wise topological
mixing is transferred via the strong stable relation of geodesics.

\begin{lemma}
\label{new}Let $f,g$ and $g^{\prime}\in GX$ so that $f\in \overline
{W^{ss}\left(  g\right)  },f$ is non-closed, $g$ is closed
and unique with $  \widetilde{g}\left(  +\infty \right) \in F_{h}^{u}$ for some,
hence any, lift $\widetilde{g}$  of $g.$ Then, if $g\sim_{s_{n}}g^{\prime}$  
for some sequence $s_{n}\rightarrow \infty,$ then $f\sim_{s_{n}}g^{\prime}.$
\end{lemma}

\begin{proof}
Fix a sequence $s_{n}\rightarrow \infty.$ By Remark \ref{suggestedrem2}, it
suffices to prove the assertion of the lemma for $f$ non-closed and $f\in
W^{ss}\left(  g\right)  .$ The rest of the proof follows the line of proof
given in \cite[Lemma 4.4]{[CT3]} which we include here since several
restrictions apply in our setup.

In order to use Lemma \ref{criterion} above for showing that $f\sim_{s_{n}%
}g^{\prime},$ let $\left \{  t_{n}\right \}  $ be arbitrary subsequence of
$\left \{  s_{n}\right \}  .$ As $g\sim_{s_{n}}g^{\prime}$ there exists (again
by Lemma \ref{criterion}) a subsequence $\left \{  r_{n}\right \}  $ of
$\left \{  t_{n}\right \}  $ and a sequence $\left \{  g_{n}\right \}  $ such
that
\[
g_{n}\rightarrow g\, \, \mathrm{and}\, \,r_{n}\cdot g_{n}\rightarrow
g^{\prime}.
\]
Lift $g$ and $f$ to geodesics $\overline{g}$ and $\overline{f}$ in
$G\widetilde{X}$ such that $\overline{f}\left(  +\infty \right)  =\overline
{g}\left(  +\infty \right)  $ and 
\[
\alpha \left(  \overline{f}\left(
+\infty \right)  ,\overline{f}\left(  0\right)  ,\overline{g}\left(  0\right)
\right)  =0
\]
Lift each $g_{n}$ to a geodesic $\overline{g_{n}}$ such that
$\overline{g_{n}}\rightarrow \overline{g}.$ Since $g$ is unique, the latter is equivalent
to $\overline{g_{n}}\left(  +\infty \right)  \rightarrow \overline{g}\left(
+\infty \right)  ,$ $\overline{g_{n}}\left(  -\infty \right)  \rightarrow
\overline{g}\left(  -\infty \right)  $ and $\overline{g_{n}}\left(  0\right)
\rightarrow \overline{g}\left(  0\right)  .$ We may assume (cf Lemma
\ref{criterion}) that
\[
\left \{  \overline{g_{n}}\left(  +\infty \right)  \bigm \vert n\in
\mathbb{N}\right \}  \subset \partial \widetilde{X}\setminus F_{h}^{nu}.
\]
Use Lemma \ref{karafinalL}(b) to define a sequence of geodesics $\left \{
\overline{f_{n}}\right \}  _{n\in \mathbb{N}}$ such that $\overline{f_{n}%
}\rightarrow \overline{f}$ with $\overline{f_{n}}\left(  +\infty \right)
=\overline{g_{n}}\left(  +\infty \right)  $ and $\overline{f_{n}}\left(
-\infty \right)  =\overline{f}\left(  -\infty \right)  .$ By the continuity of
the $\alpha$ function we have that
\[
\mathrm{lim}_{n\rightarrow \infty}\alpha \bigl(\xi_{n},\overline{f_{n}}\left(
0\right)  ,\overline{g_{n}}\left(  0\right)  \bigr)=\alpha \bigl(\xi
,\overline{f}\left(  0\right)  ,\overline{g}\left(  0\right)  \bigr)=0
\]
hence, by passing if necessary to a subsequence of $\left \{  \overline{f_{n}%
}\right \}  _{n\in \mathbb{N}},$ we may assume that
\[
\alpha \bigl(\xi_{n},\overline{f_{n}}\left(  0\right)  ,\overline{g_{n}}\left(
0\right)  \bigr)<1/n,\, \, \,for\, \,all\, \,n\in \mathbb{N}%
\]
By lemma \ref{busman}(c) we may choose the parametrization of each
$\overline{f_{n}}$ so that
\begin{equation}
\alpha \bigl(\xi_{n},\overline{f_{n}}\left(  0\right)  ,\overline{g_{n}}\left(
0\right)  \bigr)=0,\, \, \,for\, \,all\, \,n\in \mathbb{N} \label{efen}%
\end{equation}
As the change of parametrization tends to $0$ as $n\rightarrow \infty$ we may
assume that the sequence $\left \{  \overline{f_{n}}\right \}  _{n\in \mathbb{N}%
}$ satisfies equation (\ref{efen}) and $\overline{f_{n}}\rightarrow
\overline{f}.$ Moreover, if we set $\,f_{n}:=p\left(  \overline{f_{n}}\right)
$ then $f_{n}\rightarrow f.$ We proceed now to show that $r_{n}\cdot
f_{n}\rightarrow g^{\prime}.$ Let $K$ be an arbitrary compact subset of
$\mathbb{R}$ and $\varepsilon$ arbitrary positive. By construction,
$\overline{f_{n}}\in W^{ss}\left(  \overline{g_{n}}\right)  $ for all
$n\in \mathbb{N}$ and $\overline{f}\in W^{ss}\left(  \overline{g}\right)  .$
Since $\overline{g_{n}}\left(  +\infty \right)  \in \partial \widetilde
{X}\setminus F_{h}^{nu},$ condition (C) applies for all pairs $\overline{g_{n}%
},\overline{f_{n}}$ and $\overline{g},\overline{f}.$ Therefore, by Proposition
\ref{asy},
\begin{equation}%
\begin{array}
[c]{l}%
\mathrm{lim}_{t\rightarrow \infty}d\bigl(\overline{f_{n}}\left(  t\right)
,\overline{g_{n}}\left(  t\right)  \bigr)=0,\, \, \, \mathrm{and}\\
\mathrm{lim}_{t\rightarrow \infty}d\bigl(\overline{f}\left(  t\right)
,\overline{g}\left(  t\right)  \bigr)=0.
\end{array}
\label{double}%
\end{equation}
Choose a positive real $T$ such that
\[
d\bigl(\overline{f}\left(  T\right)  ,\overline{g}\left(  T\right)  \bigr
)<\varepsilon/6
\]
The above equation holds for all $t>T.$ This follows by convexity of the
distance function (see \cite[Ch.I, Proposition 5.4]{[Bal]}) and equation
(\ref{double}). As $\overline{f_{n}}\rightarrow \overline{f}$ and
$\overline{g_{n}}\rightarrow \overline{g}$ we may choose $N\in \mathbb{N}$ such
that for all $n\geq N$
\[%
\begin{array}
[c]{l}%
d\bigl(\overline{f_{n}}\left(  T\right)  ,\overline{f}\left(  T\right)
\bigr
)<\varepsilon/6,\, \, \, \mathrm{and}\\
d\bigl(\overline{g_{n}}\left(  T\right)  ,\overline{g}\left(  T\right)
\bigr )<\varepsilon/6.
\end{array}
\]
Thus, $d\bigl(\overline{f_{n}}\left(  T\right)  ,\overline{g_{n}}\left(
T\right)  \bigr
)<\varepsilon/2$ and as before, it follows that
\[
d\bigl(\overline{f_{n}}\left(  t\right)  ,\overline{g_{n}}\left(  t\right)
\bigr)<\varepsilon/2\, \, \, \mathrm{for\, \, \,all\, \,}t>T.
\]
As $r_{n}\rightarrow+\infty,$ there exists $n_{0}$ such that $r_{n}\geq
T- \min K$ for all $n\geq n_{0}.$ Now for all $n$ sufficiently large, namely,
$n\geq \mathrm{max}\left \{  N,n_{0}\right \}  ,$ we have
\[
d\bigl(\overline{f_{n}}\left(  r_{n}+t\right)  ,\overline{g_{n}}\left(
r_{n}+t\right)  \bigr)<\varepsilon/2,\, \, \forall \, \,t\in K
\]
which implies that
\[
d\bigl(r_{n}\cdot f_{n}\left(  t\right)  ,r_{n}\cdot g_{n}\left(  t\right)
\bigr)<\varepsilon/2,\, \, \forall \, \,t\in K
\]
As $r_{n}\cdot g_{n}\rightarrow g^{\prime},$ we have that for all $n$
sufficiently large
\[
d\bigl(r_{n}\cdot g_{n}\left(  t\right)  ,g^{\prime}\left(  t\right)
\bigr)<\varepsilon/2,\, \, \forall \, \,t\in K
\]
Combining the last two inequalities we obtain that
\[
d\bigl(r_{n}\cdot f_{n}\left(  t\right)  ,g^{\prime}\left(  t\right)
\bigr)<\varepsilon,\, \, \forall \, \,t\in K
\]
As $K,\varepsilon$ were arbitrary, we have shown that for all $n$ sufficiently
large, $r_{n}\cdot f_{n}$ lies in any neighborhood of $g^{\prime}.$ Therefore,
$r_{n}\cdot f_{n}\rightarrow g^{\prime}$ as required.
\end{proof}

\begin{proof}
[Proof of Theorem \ref{mainn}]It suffices to show that
\begin{multline*}
\forall \,h,f\in GX\, \, \mathrm{and}\, \, \forall \, \left \{  t_{n}\right \}  \,
\mathrm{with}\,t_{n}\rightarrow \infty,\\
\exists \, \, \mathrm{sub}\text{-} \mathrm{sequence}\, \, \left \{
s_{n}\right \}  \subset \left \{  t_{n}\right \}  \, \, \mathrm{such}\, \,
\mathrm{that}\, \,h\sim_{s_{n}}f.
\end{multline*}
To see that this property is sufficient, assume it holds and, on the contrary,
the geodesic flow is not mixing. Then, there would exist neighborhoods
$\mathcal{O}$ and $\mathcal{U}$ in $GX$ such that: for each $n\in \mathbb{N},$
there exists $T_{n}>n$ so that $T_{n}\cdot \mathcal{O}\cap \mathcal{U}%
=\emptyset.$ Clearly, for any subsequence $\left \{  s_{n}\right \}  $ of
$\left \{  T_{n}\right \}  $ we have $s_{n}\cdot \mathcal{O}\cap \mathcal{U}%
=\emptyset.$ Thus, for any $h\in \mathcal{O}$ and $f\in \mathcal{U}$ the above
property does not hold.

Since the notion of $s_{n}$-mixing is defined via neighborhoods it suffices to
show the above property only for geodesics $h,f$ which are not closed$.$ This
will allow the use of Lemma \ref{new}. \newline By condition (D), let $c$ be a
closed and unique geodesic with
\[
\left \{  \widetilde{c}\left(  -\infty \right)  ,\widetilde{c}\left(
+\infty \right)  \right \}  \subset F_{h}^{u} \subset \partial \widetilde{X}
\setminus F_{h}^{nu}%
\]
for some, hence any, lift $\widetilde{c}$ of $c .$ By Proposition
\ref{ssclosure_closed} we have $\overline{W^{ss}\left(  c\right)  }=GX.$
Clearly, for all $t\in \mathbb{R},\overline{W^{ss}\left(  t\cdot c\right)
}=GX.$ Let $\left \{  s_{n}\right \}  $ be a subsequence of $\left \{
t_{n}\right \}  $ such that $s_{n}\cdot c\rightarrow t\cdot c$ for some
$t\in \left[  0,\omega \right]  $ where $\omega$ is the period of $c.$ Clearly,
for any neighborhood $\mathcal{U}$ of $t\cdot c,$ $s_{n}\cdot c\in \mathcal{U}$
for large enough $n\in \mathbb{N}.$ In other words, $c\sim_{s_{n}}t\cdot c.$ As
$f\in GX=\overline{W^{ss}\left(  c\right)  }$ is non-closed we may apply Lemma
\ref{new} to the geodesics $f,c$ and $t\cdot c$ to obtain $f\sim_{s_{n}}t\cdot
c.$ The latter is, by Lemma \ref{snLemma}, equivalent to $-t\cdot c\sim
_{s_{n}}-f.$ As $-h\in \overline{W^{ss}\left(  -t\cdot c\right)  }=GX$ we apply
Lemma \ref{new} to the geodesics $-h,-t\cdot c$ and $-f$ to obtain
$-h\sim_{s_{n}}-f.$ By Lemma \ref{snLemma}, the latter is equivalent to
$f\sim_{s_{n}}h,$ as required.
\end{proof}

\section{Examples and Applications}

\subsection{Euclidean Surfaces and their properties}

We start by recalling the notion of a Euclidean surface with conical singularities.

Let $S$ be a surface equipped with a Euclidean metric with finitely many
\textit{conical singularities} (or \textit{conical points}), say
$s_{1},...,s_{n}$ in its interior. Every point which is not conical is called
a \textit{regular point} of $S.$ Denote by $\theta(s_{i})$ the angle at each
$s_{i}$ and we assume that $\theta(s_{i})\in(2\pi,+\infty).$

We write $C\left(  v,\theta \right)  $ for the standard cone with vertex $v$
and cone angle $\theta,$ namely, $C\left(  v,\theta \right)  $ is the set
$\left \{  \left(  r,t\right)  :0\leq r,t\in \mathbb{R}/\theta \mathbb{Z}%
\right \}  $ equipped with the metric $ds^{2}=dr^{2}+r^{2}dt^{2}.$

\begin{definition}
A Euclidean surface with conical singularities $s_{1},...,s_{n}$ is a surface
$S$ equipped with a length metric $d\left(  \cdot,\cdot \right)  $ such that

\begin{itemize}
\item Every point $p\in S\setminus \left \{  s_{1},...,s_{n}\right \}  $ has a
neighborhood isometric to a disk or half disk in the Euclidean plane

\item Each $s_{i}\in \left \{  s_{1},...,s_{n}\right \}  \subset S\setminus
\partial S$ has a neighborhood isometric to a neighborhood of the vertex $v$
of the standard cone $C\left(  v,\theta \left(  s_{i}\right)  \right)  .$
\end{itemize}
\end{definition}

Clearly, the metric on $S$ is a length metric and the surface $S\ $will be
written $e.s.c.s.$ for brevity. Note that for genus $g\geq2,$ such Euclidean
structures exist, see \cite{[Tro]}. Let $\widetilde{S}$ be the universal
covering of $S$ and let $p:\widetilde{S}\rightarrow S$ be the covering
projection. Obviously, the universal covering $\widetilde{S}$ is homeomorphic
to $\mathbb{R}^{2}$ and by requiring $p$ to be a local isometric map we may
lift $d$ to a metric on $\widetilde{S},$ denoted again by $d,$ so that
$(\widetilde{S},$ $d)$ becomes a $e.s.c.s.$

We will use the following

\begin{theorem}
[{Theorem 12 in \cite{[CPT]}}]\label{mathZ} Let $g$ be a non-closed geodesic
or geodesic ray in a closed $e.s.c.s.$ $S$ with genus $\geq2.$ Then $d\left(
\operatorname{Im}g,\left \{  s_{1},...,s_{n}\right \}  \right)  =0.$
\end{theorem}

\begin{corollary}
\label{mathZC}Let $Q$ be a compact $e.s.c.s.$ and $g$ be a non-closed geodesic
or geodesic ray in $Q\setminus \partial Q.$ Then $d\left(  \operatorname{Im}%
g,\left \{  s_{1},...,s_{n}\right \}  \right)  =0.$
\end{corollary}

\begin{proof}
Let $Q^{+}$ be a copy of $Q$ and glue $Q$ and $Q^{+}$ along their boundaries
to obtain a closed surface $S$ with $2n$ conical singularities $s_{1}%
,...,s_{n},s_{1}^{+},...,s_{n}^{+}.$ By Theorem \ref{mathZ},
\[
d\left(  \operatorname{Im}g,\left \{  s_{1},...,s_{n},s_{1}^{+},...,s_{n}%
^{+}\right \}  \right)  =0.
\]
Since $\operatorname{Im}g\subset Q,$ it is clear that $d\left(
\operatorname{Im}g,\left \{  s_{1}^{+},...,s_{n}^{+}\right \}  \right)  >0,$
hence,
\[
d\left(  \operatorname{Im}g,\left \{  s_{1},...,s_{n}\right \}  \right)  =0.
\]
$\; \;$ \hfill
\end{proof}

\subsection{Examples of CAT(0) surfaces}

\label{tesduo} We give an example of a $2-$dimensional CAT(0) surface $X$ and
we will show that it satisfies all four assumptions of Theorem \ref{mainn}.
\newline Let $M$ be a finite area surface of genus $g\geq2$ with pinched
negative curvature. Let $c_{M}$ be a simple closed separating geodesic in $M$
such that the closure of at least one of the components of $M\setminus
\operatorname{Im}c_{M}$ is compact. Denote by $M_{1}$ the compact subsurface
of $M$ and by $M_{2}$ the closure of the other component. Observe that $M_{2}$
may contain finitely many cusps, hence, $M_{2}$ may not be compact. Clearly,
$M=M_{1}\cup_{\operatorname{Im}c_{M}}M_{2}.$

Consider a compact $e.s.c.s.$ $S_{1}$ of the same topological type as $M_{1}$
and with its boundary component $\partial S_{1}$ isometric to
$\operatorname{Im}c_{M} .$ Set
\begin{equation}
X=S_{1}\cup_{\operatorname{Im}c_{M}}M_{2} \label{newsurfaces}%
\end{equation}
to be the surface obtained by gluing $S_{1}$ with $M_{2}$ along their
boundaries. Such a surface $X$ is a CAT(0) space and \cite[Theorem
11.6]{[Ric]} applies to prove mixing provided that the Bowen--Margulis measure
on $\partial \widetilde{X}$ is finite. However, the class of surfaces as
defined by (\ref{newsurfaces}) includes examples where the Bowen--Margulis
measure is not finite. For example, take the surface $M$ to be the surface
with one cuspidal end constructed in \cite[Theorem 1.2]{[Dal]} whose
fundamental group is exotic and convergent, thus, the corresponding
Bowen--Margulis measure is infinite.

The above construction can be performed by using, instead of a single simple
closed separating geodesic, a collection $c_{1} , \ldots, c_{k}$ of pairwise
disjoint simple closed geodesics such that the union $\mathrm{Im}c_{1}
\cup \cdots \cup \mathrm{Im}c_{k}$ splits $M$ into two components with the
closure of at least one of them being compact.

The rest of this sub-section is devoted into showing that a surface $X$ as
defined in (\ref{newsurfaces}) satisfies all four assumption of Theorem
\ref{mainn} and, thus, establish the following

\begin{application}
If $X$ is a surface of the form $X=S_{1}\cup_{\operatorname{Im}c_{M}}M_{2}$
constructed in (\ref{newsurfaces}) above, then the geodesic flow
$\mathbb{R}\times GX\rightarrow GX$ is topologically mixing.
\end{application}

\noindent We start with Condition ($\mathrm{\Delta}$) by showing

\begin{proposition}
\label{hyperlast} The space $\widetilde{X}$ is hyperbolic in the sense of Gromov.
\end{proposition}

We need the following

\begin{lemma}
\label{segmentlemma}Let $\left[  x,y\right]  $ and $\left[  x,z\right]  $ be
geodesic segments in a CAT(0) geodesic metric space $Y$ with $d\left(
y,z\right)  \leq C_{0}$ for some $C_{0} >0.$ Then for every point $A\in \left[
x,y\right]  $ there exists $B\in \left[  x,z\right]  $ such that $d\left(
A,B\right)  \leq  C_{0}.$
\end{lemma}

\begin{proof}
The desired property holds for triangles in $\mathbb{R}^2$ hence, by CAT(0)
inequality, the proof follows. 
\end{proof}\\[4mm]

\begin{proof}
[Proof of Proposition \ref{hyperlast}]
If $X$ does not have any cusps, $X$ is closed and $\pi_1 (X)$ hyperbolic, in which case
we have nothing to show. Assume now that $X$ contains at least one cusp. 
Let $X_{0}$ be the subsurface of $X$
obtained as follows: consider a horoball based at each cusp of $X$ and remove
its interior. Then, $X_{0}$ is a compact surface with as many geodesic
boundary components as the number of cusps in $X.$ We may assume that the
length of all boundary components is bounded by $C_{0}>0.$

Clearly, the universal cover $\widetilde{X}_{0}$ of $X_{0}$ is a subsurface of
$\widetilde{X}$ whose fundamental domain is a polygon which can be obtained
from the ideal fundamental domain of $\widetilde{X}$ by cutting off all its
ideal vertices by horocycles. As $X_{0}$ is compact, hence hyperbolic, and the
fundamental group $\pi_{1}\left(  X_{0}\right)  =\pi_{1}\left(  X\right)  $ is
free, it follows that $\widetilde{X}_{0}$ is $\delta_{\widetilde{X}_{0}}%
$-hyperbolic for some $\delta_{\widetilde{X}_{0}}>0.$ We will use the
hyperbolicity of $\widetilde{X}_{0}$ to prove Proposition \ref{hyperlast}%
.\\[4mm]\begin{figure}[ptb]
\begin{center}
%\raisebox{-4.1cm}{
\includegraphics[scale=0.5]{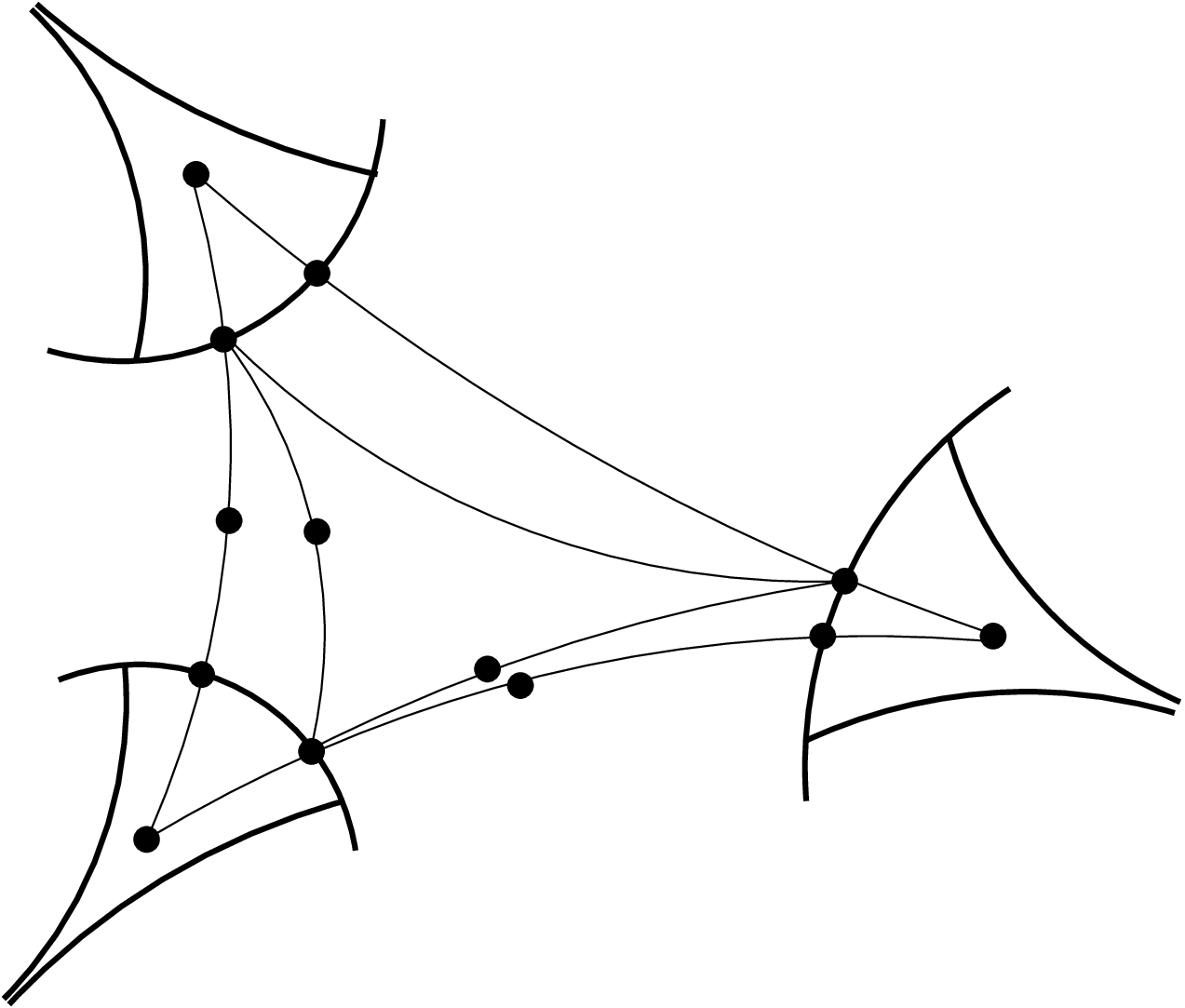}\\
\begin{picture}(22,12)
\put(-104,109){$x_y$}
\put(-55,74){$x_z$}
\put(-109,67){$x$}
\put(-101,193){$y_x$}
\put(-54,204){$y_z$}
\put(-100,235){$y$}
\put(72,100){$z_x$}
\put(65,133){$z_y$}
\put(119,105){$z$}
\put(-11,86){$A$}
\put(-23,106){$B$}
\put(-55,133){$C$}
\put(-97,140){$D$}
\end{picture}
\end{center}
\caption{The triangle $(x,y,x)$ in $\widetilde{X}$ and the thin triangle $(
x_{z} , z_{y} , y_{x} )$ in $\widetilde{X}_{0}$}%
\label{sximataki}%
\end{figure}\newline Let $\left(  x,y,z\right)  $ be a geodesic triangle in
$\widetilde{X}.$ We will show that every point $A\in \left[  x,z\right]  $
satisfies
\begin{equation}
d\left(  A,\left[  x,y\right]  \cup \left[  y,z\right]  \right)  \leq
\delta_{\widetilde{X}_{0}}+2C_{0} \label{deltainequality}%
\end{equation}
thus, showing that $\widetilde{X}$ is hyperbolic in the sense of Gromov.

Clearly, since $\widetilde{X}_{0}$ is $\delta_{\widetilde{X}_{0}}
-$hyperbolic, if $x,y,z\in \widetilde{X}_{0}$ we have nothing to show. We treat
the case $x,y,z\in \widetilde{X}\setminus \widetilde{X}_{0},$ and the other
cases can be treated similarly.

Denote by $\left[  x_{y},y_{x} \right] $ the intersection$\left[  x,y\right]
\cap \partial \widetilde{X}_{0}$ and, similarly, $\left[  x_{z},z_{x}\right] $
and $\left[  y_{z},z_{y}\right]  $ (see Figure \ref{sximataki}). Note that
both points $x_{y},x_{z}$ (resp. $y_{x},y_{z}$ and $z_{x},z_{y}$) belong to a
single horocycle side of $\widetilde{X}_{0}.$ Since the length of the boundary
components of $X_{0}$ are assumed to be bounded by $C_{0}$ we have
\begin{equation}
d\left(  x_{y},x_{z}\right)  \leq C_{0},d\left(  z_{x},z_{y}\right)  \leq
C_{0}~\mathrm{and}~d\left(  y_{x},y_{z}\right)  \leq C_{0}. \label{cmiden}%
\end{equation}
If $A\in \left[  x,x_{z}\right]  $ then, since $d\left(  x_{z},x_{y}\right)
\leq C_{0},$ by Lemma \ref{segmentlemma} we have
\[
d\left(  A,\left[  x,x_{y}\right]  \right)  \leq C_{0}.
\]
%and therefore, $d\left(  A,\left[  x,y\right]  \right)  \leq2C_{0}.$
%\newline
Similarly, if $A\in \left[  z_{x},z\right]  $ we obtain $d\left(  A,\left[
z,z_{y} \right]  \right)  \leq C_{0}.$
%$d\left(  A,\left[ z,y\right]  \right)  \leq2C_{0}.$
It follows that if $A\in \left[  x,x_{y}\right]  \cup \left[  z_{x},z\right]  $
then
\[
d\left(  A,\left[  x,y\right]  \cup \left[  y,z\right]  \right)  \leq C_{0}
\]
hence, the desired inequality (\ref{deltainequality}) holds.

If $A\in \left[  x_{z},z_{x}\right]  ,$ apply Lemma \ref{segmentlemma} to the
segments $\left[  x_{z},z_{x}\right]  $ and $\left[  x_{z},z_{y}\right]  $
using (\ref{cmiden}) to obtain a point
\[
B\in \left[  x_{z},z_{y}\right]  ~~\mathrm{with}~~d\left(  A,B\right)
\leq C_{0}.
\]
As $\widetilde{X}_{0}$ is $\delta_{\widetilde{X}_{0}} -$hyperbolic there
exists a point%
\[
C\in \left[  x_{z},y_{x}\right]  \cup \left[  y_{x},z_{y}\right]
~~\mathrm{with}~~d\left(  B,C\right)  \leq \delta_{\widetilde{X}_{0}}.
\]
Without loss of generality we may assume that $C\in \left[  x_{z},y_{x}\right]
.$ Again by Lemma \ref{segmentlemma} applied to the segments $\left[
x_{z},y_{x}\right]  $ and $\left[  x_{y},y_{x}\right]  $ we find a point
\[
D\in \left[  x_{y},y_{x}\right]  ~~\mathrm{with}~~d\left(  C,D\right)
\leq C_{0}.
\]
Combining the last three inequalities we obtain $d\left(  A,D\right)
\leq \delta_{\widetilde{X}_{0}}+2C_{0},$ thus, we have shown
(\ref{deltainequality}) for all $A\in \left[  x,y \right]  .$.
\end{proof}

\begin{proposition}
$\widetilde{X}$ satisfies Condition $(U)$.
\end{proposition}

\begin{proof}
Assume not, that is, assume there exist non-closed geodesics $f,f^{\prime}$ in
$G\widetilde{X}$ with $f\left(  +\infty \right)  =f^{\prime}\left(
+\infty \right)  $ and $f\left(  -\infty \right)  =f^{\prime}\left(
-\infty \right)  $ so that $\operatorname{Im}f\neq \operatorname{Im}f^{\prime}.$
Then by the Flat Strip Theorem, $\operatorname{Im}f$ and $\operatorname{Im}%
f^{\prime}$ bound a flat strip. Pick any geodesic $g$ in the interior of the
flat strip. Clearly, $\operatorname{Im}g$ has positive distance from the set
of conical points in $\widetilde{X}$ and $p\left(  g\right)  $ does not
intersect $M_{2}.$ That is, $p\left(  g\right)  $ in contained in the compact
$e.s.c.s.$ $S_{1}$ and
\[
d\left(  \operatorname{Im}p\left(  g\right)  ,\left \{  s_{1},...,s_{n}%
\right \}  \right)  >0.
\]
Since $g$ is homotopic to both $f,f^{\prime}$ it projects to a non-closed
geodesic, thus, the above inequality contradicts Corollary \ref{mathZC}.
\end{proof}

We proceed now to show Condition ($\mathrm{C}$).

\begin{proposition}
\label{condU}Let $g_{1},g_{2}$ be asymptotic geodesics with
\[
\xi=g_{1}\left(  +\infty \right)  =g_{2}\left(  +\infty \right)  \in
\partial \widetilde{X}\setminus F_{h}^{nu}.
\]
Then for appropriate parametrizations of $g_{1},g_{2}$ we have
\[
\lim_{t\rightarrow \infty}d\bigl(g_{1}\left(  t\right)  ,g_{2}\left(  t\right)
\bigr)=0.
\]

\end{proposition}

We first show the following

\begin{lemma}
\label{karaneww}Let $g_{1},g_{2}$ be two geodesics with
\[
\xi=g_{1}\left(  +\infty \right)  =g_{2}\left(  +\infty \right)  \in
\partial \widetilde{X}\setminus F_{h}^{nu}%
\]
all as in the above Proposition. Assume that
\[
d\left(  \mathrm{Im}g_{1},\mathrm{Im}g_{2}\right)  :=\inf \left \{  d\left(
x,y\right)  \bigm \vert x\in \mathrm{Im}g_{1},y\in \mathrm{Im}g_{2}\right \}
=0.
\]
Then there exists a unique re-parametrization $\overline{g}_{1}$ of $g_{1}$
such that
\[
\lim_{t\rightarrow \infty}d\bigl(\overline{g}_{1}\left(  t\right)
,g_{2}\left(  t\right)  \bigr)=0.
\]

\end{lemma}

\begin{proof}
As $g_{1},g_{2}$ are asymptotic, the distance function
\[
t\rightarrow d\left(  g_{1}\left(  t\right)  ,g_{2}\left(  t\right)  \right)
,t\geq0
\]
is convex (see \cite{[Bal]} Ch.I, Prop. 5.4) and bounded. Therefore, it is
decreasing with a global infimum, say, $C\geq0.$ Clearly, if $C=0$ we have
nothing to show. Assume $C>0.$ For each point $g_{2}\left(  t\right)  $ on
$\mathrm{Im}g_{2},$ denote by $g_{1}\left(  s\left(  t\right)  \right)  $ the
unique point on $\mathrm{Im}g_{1}$ realizing the distance $d\left(
g_{2}(t),\mathrm{Im}g_{1}\right)  .$ By \cite{[Bal]} Ch.I, Cor. 5.6, the
function $t\rightarrow d\left(  g_{1}\left(  s\left(  t\right)  \right)
,g_{2}\left(  t\right)  \right)  ,$ $t\geq0$ is convex and by assumption it
decreases to $0.$ As $d\left(  g_{1}\left(  t\right)  ,g_{2}\left(  t\right)
\right)  \searrow C$ it follows that
\begin{equation}
d\left(  g_{1}\left(  s\left(  t\right)  \right)  ,g_{1}\left(  t\right)
\right)  \rightarrow C\mathrm{\ as\ }t\rightarrow \infty. \label{ctria}%
\end{equation}
Since $g_{1}$ is a geodesic, $\left \vert t-s\left(  t\right)  \right \vert
\rightarrow C$ as $t\rightarrow \infty.$ There exists a sequence $t_{n}%
\rightarrow \infty$ such that
\begin{equation}
t_{n}-s\left(  t_{n}\right)  \rightarrow \delta C\mathrm{\ with\ }\left \vert
\delta \right \vert =1. \label{cena}%
\end{equation}
Define the geodesic $\overline{g}_{1}$ by $\overline{g}_{1}(t):=g_{1}(t-\delta
C)$ and we will show that
\[
d\left(  \overline{g}_{1}\left(  t_{n}\right)  ,g_{2}\left(  t_{n}\right)
\right)  \rightarrow0\mathrm{\ as\ }n\rightarrow \infty.
\]
By the triangle inequality we have
\begin{align*}
d\left(  \overline{g}_{1}\left(  t_{n}\right)  ,g_{2}\left(  t_{n}\right)
\right)   &  \leq d\left(  \overline{g}_{1}\left(  t_{n}\right)  ,g_{1}\left(
s\left(  t_{n}\right)  \right)  \right)  +d\left(  g_{1}\left(  s\left(
t_{n}\right)  \right)  ,g_{2}\left(  t_{n}\right)  \right) \\
&  =d\left(  g_{1}\left(  t_{n}-\delta C\right)  ,g_{1}\left(  s\left(
t_{n}\right)  \right)  \right)  +d\left(  g_{1}\left(  s\left(  t_{n}\right)
\right)  ,g_{2}\left(  t_{n}\right)  \right) \\
&  =\left \vert t_{n}-\delta C-s\left(  t_{n}\right)  \right \vert +d\left(
g_{1}\left(  s\left(  t_{n}\right)  \right)  ,g_{2}\left(  t_{n}\right)
\right) \\
&  \rightarrow0+0
\end{align*}
where the first equality holds by definition of $\overline{g}_{1},$ the second
equality holds since $g_{1}$ is a geodesic and the limit is obtained by
\ref{cena} and the fact that $d\left(  g_{1}\left(  s\left(  t\right)
\right)  ,g_{2}\left(  t\right)  \right)  ,$ $t\geq0$ decreases to $0.$
\end{proof}

We will need a Gauss-Bonnet formula stated for a simply connected
non-positively curved surface $P$ with one piece-wise geodesic boundary
component $\partial P$ and finitely many conical points in its interior and/or
its boundary. For each conical point $s\in \partial P$ denote by $\theta
_{P}\left(  s\right)  $ the cone angle at $s$ inside $P.$ Then the following
holds (see Proposition 8 in \cite{[CP]})
\begin{equation}
2\pi \leq%
%TCIMACRO{\dsum \limits_{s\in P\setminus\partial P}}%
%BeginExpansion
{\displaystyle \sum \limits_{s\in P\setminus \partial P}}
%EndExpansion
\left(  2\pi-\theta \left(  s\right)  \right)  +%
%TCIMACRO{\dsum \limits_{s\in\partial P}}%
%BeginExpansion
{\displaystyle \sum \limits_{s\in \partial P}}
%EndExpansion
\left(  \pi-\theta_{P}\left(  s\right)  \right)  . \label{gauss_bonnet}%
\end{equation}

\begin{proof}
[Proof of Proposition \ref{condU}]If $\mathrm{Im}g_{1}\cap \mathrm{Im}g_{2}%
\neq \emptyset$ then there must exist a $K\in \mathbb{R}$ such that
\begin{equation}
\mathrm{Im}g_{1}|_{\left[  K,+\infty \right)  }\subset \mathrm{Im}g_{2}
\label{1in2}%
\end{equation}
otherwise uniqueness of geodesic rays would be violated. Clearly, if property
(\ref{1in2}) holds the result follows trivially, so we may assume that
$\mathrm{Im}g_{1}\cap \mathrm{Im}g_{2} = \emptyset.$ If $d\left(
\mathrm{Im}g_{1},\mathrm{Im}g_{2}\right)  =0,$ the result follows from Lemma
\ref{karaneww}. We assume
\begin{equation}
d\left(  \mathrm{Im}g_{1},\mathrm{Im}g_{2}\right)  =C>0. \label{Cdisatnce}%
\end{equation}
and we will reach a contradiction. Consider the convex region $P$ bounded by
\[
\mathrm{Im}g_{1}|_{\left[  0,+\infty \right)  }\cup \mathrm{Im}g_{2}|_{\left[
0,+\infty \right)  }\cup \left[  g_{1}\left(  0\right)  ,g_{2}\left(  0\right)
\right]  \equiv \partial P.
\]
We claim that there exist finitely many conical points in the interior of $P.$
Assume, on the contrary, that there exist infinitely many conical points in
the interior of $P.$ Then, for any positive integer $N$ we may find $T_{N}%
\in \left(  0,+\infty \right)  $ so that the bounded convex region $P_{N}$
bounded by
\[
\mathrm{Im}g_{1}|_{\left[  0,T_{N}\right]  }\cup \mathrm{Im}g_{2}|_{\left[
0,T_{N}\right]  }\cup \left[  g_{1}\left(  0\right)  ,g_{2}\left(  0\right)
\right]  \cup \left[  g_{1}\left(  T_{N}\right)  ,g_{2}\left(  T_{N}\right)
\right]  \equiv \partial P_{T_{N}}%
\]
contains at least $N$ conical points in its interior. This can be done because
$g_{1},g_{2}$ are asymptotic and thus the distance between the segments
$\left[  g_{1}\left(  0\right)  ,g_{2}\left(  0\right)  \right]  $, $\left[
g_{1}\left(  T_{N}\right)  ,g_{2}\left(  T_{N}\right)  \right]  $ tends to
$+\infty$ as $T_{N}\rightarrow \infty$ which implies that $P=\cup
_{N\in \mathbb{N}}P_{T_{N}}.$\newline Since there are finitely many conical
points in $S,\ $there exists $\theta_{0}>0$ such that $2\pi-\theta \left(
s\right)  <-\theta_{0}<0$ for all conical points $s$ in $S$ and, hence,
\[
2\pi-\theta \left(  \widetilde{s}\right)  <-\theta_{0}%
<0\mathrm{\ for\ all\ conical\ points\ }\widetilde{s}\mathrm{\ in\ }%
\widetilde{S}.
\]
In particular, all terms $\left(  2\pi-\theta \left(  \widetilde{s}\right)
\right)  $ in the first summand on the RHS of formula (\ref{gauss_bonnet}) for
$P_{T_{N}}$ are negative and bounded by $-\theta_{0}.$ The terms $\left(
\pi-\theta_{P}\left(  \widetilde{s}\right)  \right)  $ are non-positive for
all $\widetilde{s}\neq g_{1}\left(  0\right)  ,g_{2}\left(  0\right)
,g_{1}\left(  T_{N}\right)  ,g_{2}\left(  T_{N}\right)  .$ It follows that for
$N$ large enough, say $N>3\frac{2\pi}{\theta_{0}},$ the RHS of formula
(\ref{gauss_bonnet}) is negative, a contradiction. This shows that, in fact,
there exist finitely many conical points in the interior of $P.$ Let $M_{1}$
be a bound on the distance between conical points in $P$ from $\left[
g_{1}\left(  0\right)  ,g_{2}\left(  0\right)  \right]  $ and
\[
M_{2}={\displaystyle \sup \limits_{t}}\left \{  d\bigl(g_{1}\left(  t\right)
,g_{2}\left(  t\right)  \bigr)\bigm \vert t\in \left[  0,+\infty \right)
\right \}  .
\]
Then the geodesic segment $\left[  g_{1}\left(  2M_{1}+2M_{2}\right)
,g_{2}\left(  2M_{1}+2M_{2}\right)  \right]  $ splits $P$ into two
subsurfaces; a bounded one containing all conical points of $P$ and an
unbounded one not containing conical points. Hence, up to re-parametrization,
we may assume that $P\setminus \partial P$ does not contain any conical points.
Moreover, we may assume that
\begin{equation}
p\left(  g_{1}\left(  0\right)  \right)  , p\left(  g_{2}\left(  0\right)
\right)  \in S_{1}, \label{segproj}%
\end{equation}
Clearly, this assumption can be made if both $p\left( \mathrm{Im}g_{1} \right)$ 
and $p\left( \mathrm{Im}g_{2} \right)$ intersect $S_{1}$ (by appropriately 
restricting  $g_1$ and $ g_2 $). We need to verify or exclude the following
four additional cases:

(i) if both $p\left( \mathrm{Im}g_{1} \right)$ and $p\left( \mathrm{Im}g_{2} \right)$
are contained in $M_2$ then the desired result follows as $M_2$ 
has strictly negative curvature. 

(ii) if $p\left( \mathrm{Im}g_{1} \right) \subset M_2$ and 
  $p\left( \mathrm{Im}g_{2} \right) \subset S_1$ (or, vice-versa) 
  then $g_1$ and $g_2$ cannot be asymptotic.
  
(iii) if $p\left( \mathrm{Im}g_{1} \right) \subset M_2$ and
    $p\left( g_{2} \right) $ intersects $\mathrm{Im}c$ finitely many times then
     by appropriate restriction of $g_2$ this case is reduced to either case (i)
     or (ii).
     
(iv)     if $p\left( \mathrm{Im}g_{1} \right) \subset M_2$ and
    $p\left( g_{2} \right) $ intersects $\mathrm{Im}c$ infinitely many times,
    we may pick a lift $\widetilde{c}$ of $c$ such that 
    $d(g_2 (0), \mathrm{Im}\widetilde{c}) > d(g_2 (0),g_1 (0)). $ Then  
    $\mathrm{Im}\widetilde{c}$ splits $\widetilde{X}$ into two subsurfaces
and $\partial\widetilde{X}\setminus \left\{\widetilde{c}(-\infty), 
           \widetilde{c}(\infty)  \right\}$ consists of two components 
           one containing $g_1 (\infty)$ and the other $g_2 (\infty) .$
           This is a contradiction since $g_1 ,g_2$ are assumed asymptotic.\\[3mm]
Enumerate the conical points on 
$\mathrm{Im}g_{1}|_{\left[  0,+\infty \right) }$
by
\[
\widetilde{s}_{0}^{1}=g_{1}\left(  0\right)  ,\widetilde{s}_{1}^{1}%
,\widetilde{s}_{2}^{1},\widetilde{s}_{3}^{1},\ldots,\widetilde{s}_{j}%
^{1},\ldots
\]
according to their distance from $g_{1}\left(  0\right)  $, that is,
$\widetilde{s}_{j}^{1}\in \left[  \widetilde{s}_{0}^{1},\widetilde
{s}_{j^{\prime}}^{1}\right]  $ for all $j<j^{\prime}.$ We exclude from the
enumeration any conical point whose angle inside $P$ is $=\pi.$ We also allow
the case where $\mathrm{Im}g_{1}|_{\left[  0,+\infty \right)  }$ contains
finitely many conical points, that is, the above sequence being finite.
Similarly for $\mathrm{Im}g_{2}|_{\left[  0,+\infty \right)  }.$ As the angle
at each $\widetilde{s}_{j}^{1}$ is $>\pi$ we may extend each geodesic segment
$\left[  \widetilde{s}_{j+1}^{1},\widetilde{s}_{j}^{1}\right]  ,$
$j=1,2,\ldots$ to a geodesic segment $\left[  \widetilde{s}_{j+1}^{1}%
,x_{j}^{1}\right]  \ni \widetilde{s}_{j}^{1}$ so that $x_{j}^{1}\in \partial P$
and the angle at $\widetilde{s}_{j}^{1}$ inside $P$ is $=\pi.$ We claim that
$x_{j}^{1}$ cannot belong to $\mathrm{Im}g_{2}|_{\left[  0,+\infty \right)  }$
and, thus, $x_{j}^{1}\in$ $\left[  g_{1}\left(  0\right)  ,g_{2}\left(
0\right)  \right]  .$ Assume on the contrary that $x_{j}^{1}$ for some $j$
belongs to $\mathrm{Im}g_{2}|_{\left[  0,+\infty \right)  }.$ Denote by
$\left[  x_{j}^{1},\xi \right)  _{g_{2}}$ the geodesic sub-ray of $g_{2}$
emanating from $x_{j}^{1}.$ Similarly, denote by $\left[  \widetilde{s}%
_{j}^{1},\xi \right)  _{g_{1}}$ the geodesic sub-ray of $g_{1}$ emanating from
$\widetilde{s}_{j}^{1}.$ Then, the union
\[
\left[  x_{j}^{1},\widetilde{s}_{j}^{1}\right]  \cup \left[  \widetilde{s}%
_{j}^{1},\xi \right)  _{g_{1}}%
\]
is also a geodesic ray from $x_{j}^{1}$ to $\xi$ because the angle at
$\widetilde{s}_{j}^{1}$ is $\geq \pi$ on both sides. This contradicts
uniqueness of geodesic rays in the CAT(0) space $\widetilde{X}.$ For any
$j<j^{\prime}$ the geodesic segments $\left[  \widetilde{s}_{j}^{1},x_{j}%
^{1}\right]  $ and $\left[  \widetilde{s}_{j^{\prime}}^{1},x_{j^{\prime}}%
^{1}\right]  $ cannot intersect, otherwise there would exist two geodesic
segments joining the intersection point with $\widetilde{s}_{j^{\prime}}^{1}$
contradicting uniqueness of geodesic segments. Hence, $x_{j}^{1}$ and
$x_{j^{\prime}}^{1}$ are distinct. Moreover, as the geodesic triangle formed
by $\widetilde{s}_{j^{\prime}}^{1},$ $g_{1}(0)$ and $x_{j^{\prime}}^{1}$
contains the segment $\left[  \widetilde{s}_{j}^{1},x_{j}^{1}\right]  ,$ it
follows that $d\left(  g_{1}\left(  0\right)  ,x_{j}^{1}\right)  <d\left(
g_{1}\left(  0\right)  ,x_{j^{\prime}}^{1}\right)  .$ We denote the latter
property by the symbol $\prec$ and we have shown that
\begin{equation}
x_{j}^{1}\prec x_{j^{\prime}}^{1}\mathrm{\ for\ all\ }j<j^{\prime}.
\label{distinctxj}%
\end{equation}
Do the same with the segments $\left[  \widetilde{s}_{j+1}^{2},\widetilde
{s}_{j}^{2}\right]  ,$ $j=1,2,\ldots$ to obtain points
\[
\left \{  x_{j}^{2}\bigm \vert j=1,2,\ldots \right \}  \subset \left[
g_{1}\left(  0\right)  ,g_{2}\left(  0\right)  \right]
\]
satisfying
\[
x_{j^{\prime}}^{2}\prec x_{j}^{2}\mathrm{\ for\ all\ }j<j^{\prime}.
\]
As above, for any $j,j^{\prime}$ the geodesic segments $\left[  \widetilde
{s}_{j+1}^{1},x_{j}^{1}\right]  $ and $\left[  \widetilde{s}_{j^{\prime}%
+1}^{2},x_{j^{\prime}}^{2}\right]  $ cannot intersect, otherwise there would
exist two geodesic rays joining the intersection point with $\xi$
contradicting uniqueness of geodesic rays. Therefore,
\begin{equation}
x_{j}^{1}\prec x_{j^{\prime}}^{2}\mathrm{\ for\ all\ }j,j^{\prime}.
\label{distinctjjtonos}%
\end{equation}
Let $x^{1}$ (resp. $x^{2}$) be the unique accumulation point of the set
$\left \{  x_{j}^{1}|j=1,2,\ldots \right \}  $ (resp.$\left \{  x_{j}^{2}|j=
%\right.$ $ \left.
1,2,\ldots \right \}  )$. In the case $\mathrm{Im}g_{1}|_{\left[  0,+\infty
\right)  }$ contains finitely many conical points, $x^{1}$ is simply $\max
_{j}\left \{  x_{j}^{1}\right \}  $ and similarly for $x^{2}.$ Moreover, by
(\ref{distinctjjtonos}),
\begin{equation}
x_{j}^{1}\prec x^{1}\mathrm{\ and\ }x^{2}\prec x_{j}^{2}\mathrm{\ for\ all\ }%
j.
\end{equation}

\noindent \textbf{Case A}: $x^{1}=x^{2}.$\\[2mm]To reach a contradiction, pick
points $x_{j}^{1}$ and $x_{j^{\prime}}^{2}$ such that
\[
d\left(  x_{j}^{1},x_{j^{\prime}}^{2}\right)  <C.
\]
Denote by $r_{j}$ (resp. $r_{j^{\prime}}$) the geodesic ray emanating from
$x_{j}^{1}$ (resp. $x_{j^{\prime}}^{2}$) with $r_{j}\left(  +\infty \right)
=\xi$ (resp. $r_{j^{\prime}}\left(  +\infty \right)  =\xi$). By construction,
$r_{j}$ and $g_{1}$ have a common subray and so do $r_{j^{\prime}}$ and
$g_{2}.$ It follows from (\ref{Cdisatnce}) that for large enough $T,$
$d\left(  r_{j}\left(  T\right)  ,r_{j^{\prime}}\left(  T\right)  \right)
\geq C$ and, by choice, $d\left(  r_{j}\left(  0\right)  ,r_{j^{\prime}%
}\left(  0\right)  \right)  <C.$ Therefore, the distance function $t\rightarrow
d\left(  r_{j}\left(  t\right)  ,r_{j^{\prime}}\left(  t\right)  \right)  ,$
$t\geq0$ is not decreasing. This is a contradiction because the distance
function is convex (see \cite{[Bal]} Ch.I, Prop. 5.4) and, as $r_{j}\left(
+\infty \right)  =r_{j^{\prime}}\left(  +\infty \right)  ,$ it is also
bounded.\\[3mm]

\noindent \textbf{Case B}: $x^{1}\neq x^{2}.$ \\[2mm]In this case it is easily
seen that
\begin{equation}
d\left(  x^{1},x^{2}\right)  \geq C. \label{lowebound}%
\end{equation}
For, if $d\left(  x^{1},x^{2}\right)  <C$ we may find points $x_{j}^{1}$ and
$x_{j^{\prime}}^{2}$ such that
\[
d\left(  x_{j}^{1},x_{j^{\prime}}^{2}\right)  <C.
\]
and proceed to reach a contradiction as above. \newline Moreover, it can be
seen that
\begin{equation}
d\left(  \operatorname{Im}r_{1},\operatorname{Im}r_{2}\right)  \geq C.
\label{loweboundnew}%
\end{equation}
where $\operatorname{Im}r_{i},i=1,2$ is the geodesic ray from $x^{i}$ to
$\xi.$\newline To check this, assume $d\left(  \operatorname{Im}%
r_{1},\operatorname{Im}r_{2}\right)  \leq C-c_{0}$ for some $c_{0}>0.$ We may
find points $x_{j}^{1},x_{j^{\prime}}^{2}$ such that
\[
d\left(  x_{j}^{1},x^{1}\right)  =
%d\left(  x_^{1},x^{2}\right) +
d\left(  x^{2},x_{j^{\prime}}^{2}\right)  =c_{0}/3.
\]
and, thus, by convexity
\[
d\left(  \operatorname{Im}r_{j},\operatorname{Im}r_{1}\right)  \leq
c_{0}/3\mathrm{\ and\ }d\left(  \operatorname{Im}r_{2},\operatorname{Im}%
r_{j^{\prime}}\right)  \leq c_{0}/3
\]
Since $r_{j}$ (resp. $r_{j^{\prime}}$) and $g_{1}$ (resp. $g_{2}$) have a
common subray
\[
d\left(  \operatorname{Im}g_{1},\operatorname{Im}g_{2}\right)  =d\left(
\operatorname{Im}r_{j},\operatorname{Im}r_{j^{\prime}}\right)  .
\]
It follows that
\[%
\begin{array}
[c]{rcl}%
C & = & d\left(  \operatorname{Im}g_{1},\operatorname{Im}g_{2}\right)
\;=\;d\left(  \operatorname{Im}r_{j},\operatorname{Im}r_{j^{\prime}}\right) \\
& \leq & d\left(  \operatorname{Im}r_{j},\operatorname{Im}r_{1}\right)
+d\left(  \operatorname{Im}r_{1},\operatorname{Im}r_{2}\right)  +d\left(
\operatorname{Im}r_{2},\operatorname{Im}r_{j^{\prime}}\right) \\
& \leq & c_{0}/3+(C-c_{0})+c_{0}/3\\
& = & C-c_{0}/3.
\end{array}
\]
We distinguish two sub-cases

\textbf{Subcase B1}: $p^{-1}\left(  \operatorname{Im}c_{M}\right)  \cap P$ has
finitely many components.\newline Then, by considering subrays of $g_{1}$ and
$g_{2}$, we may assume that
\[
\mathrm{either\ }p\left(  \operatorname{Im}g_{1}\right)  \cup p\left(
\operatorname{Im}g_{2}\right)  \subset M_{2}\mathrm{\ or,\ }p\left(
\operatorname{Im}g_{1}\right)  \cup p\left(  \operatorname{Im}g_{2}\right)
\subset S_{1}.
\]
In the former case, $\operatorname{Im}g_{1},\operatorname{Im}g_{2}$ are
contained in a subsurface of $\widetilde{X}$ which has strictly negative
curvature. Thus, as they are asymptotic, their distance $d\left(
\mathrm{Im}g_{1},\mathrm{Im}g_{2}\right)  $ must be zero contradicting
(\ref{Cdisatnce}). For the latter case, pick distinct points $y_{1},y_{2}$ in the
interior of the segment $\left[  x^{1},x^{2}\right]  $ and denote by $q_{1}$
(resp. $q_{2}$) the geodesic ray emanating from $y_{1}$ (resp. $y_{2}$) with
$q_{1}\left(  +\infty \right)  =\xi$ (resp. $q_{2}\left(  +\infty \right)  =\xi
$). Clearly, $\operatorname{Im}q_{1},\operatorname{Im}q_{2}$ are disjoint,
thus they are contained in a flat subsurface of $\widetilde{X}\cap \left(
P\setminus \partial P\right)  $ and, being at bounded distance, they are
parallel. Pick a geodesic ray $r$ in the interior of the flat half strip bounded
by $q_{1},q_{2}.$ Clearly, $r\left(  +\infty \right)  =\xi$ and $p\left(
r\right)  $ is contained in the compact $e.s.c.s.$ $S_{1}$ with
\[
d\left(  \operatorname{Im}p\left(  r\right)  ,\left \{  s_{1},...,s_{n}%
\right \}  \right)  >0.
\]
By assumption, $\xi \in \partial \widetilde{X}\setminus F_{h}^{nu}$ which implies
that $r$ cannot be closed. Then the above inequality contradicts Corollary
\ref{mathZC}.

\textbf{Subcase B2}: $p^{-1}\left(  \operatorname{Im}c_{M}\right)  \cap P$ has
infinitely many components.\newline We may assume that all such components are
segments with one endpoint on $\operatorname{Im}g_{1}$ and the other on
$\operatorname{Im}g_{2}.$ In this subcase the convex region bounded by
$\operatorname{Im}r_{1}$ and $\operatorname{Im}r_{2}$ consists of infinitely
many Euclidean and hyperbolic quadrilaterals formed by sub-segments of the
components of $p^{-1}\left(  \operatorname{Im}c_{M}\right)  \cap P$ and
sub-segments of $r_{1},r_{2}.$ To fix notation, let
\[
\left \{  \left[  A_{k},B_{k}\right]  \bigm \vert k=1,2,\ldots \right \}
\]
be an enumeration of the components of $p^{-1}\left(  \operatorname{Im}%
c_{M}\right)  \cap P$ such that, for all $k,$ $A_{k}\in \operatorname{Im}%
r_{1},$ $B_{k}\in \operatorname{Im}r_{2}$ and
\[
d\left(  A_{k},r_{1}\left(  0\right)  \right)  <d\left(  A_{k+1},r_{1}\left(
0\right)  \right)  \mathrm{\ and\ }d\left(  B_{k},r_{2}\left(  0\right)
\right)  <d\left(  B_{k+1},r_{2}\left(  0\right)  \right)  .
\]
Each segment $\left[  A_{k},A_{k+1}\right]  $ (resp. $\left[  B_{k}%
,B_{k+1}\right]  $) has length bounded below by some constant depending on the
geometry of $M$ and $S_{1}.$ In other words,
\begin{equation}
\mathrm{there\ exists\ }C^{\prime}\mathrm{\ such\ that\ }d\left(
A_{k},A_{k+1}\right)  >C^{\prime}\mathrm{\ and\ }d\left(  B_{k},B_{k+1}%
\right)  >C^{\prime}, \label{2lower}%
\end{equation}
Set $A_{0}=r_{1}\left(  0\right)  ,$ $B_{0}=r_{2}\left(  0\right)  $ and
denote by $Q_{k},$ $k=1,2,\ldots$ the quadrilateral formed by the segments
$\left[  A_{k-1},A_{k}\right]  ,$ $\left[  A_{k},B_{k}\right]  ,$ $\left[
B_{k-1},B_{k}\right]  $ and $\left[  A_{k-1},B_{k-1}\right]  .$ We may assume
(cf. (\ref{segproj})) that $Q_{0}$ is a Euclidean quadrilateral and so is
$Q_{k}$ for all $k$ even. Consequently, $Q_{k}$ is a hyperbolic quadrilateral
for all $k$ odd. Clearly,
\[
P=\cup_{k=0}^{\infty}Q_{k}%
\]
and for all $m\neq n,$
\[
Q_{m}\cap Q_{n}=\left \{
\begin{array}
[c]{ll}%
\left[  A_{\min \left \{  m,n\right \}  },B_{\min \left \{  m,n\right \}  }\right]
& \mathrm{if\ }\left \vert m-n\right \vert =1\mathrm{\ and}\\
\emptyset & \mathrm{otherwise.}%
\end{array}
\right.
\]
Denote by $a_{k},\beta_{k},\gamma_{k}$ and $\delta_{k}$ the angles of $Q_{k},$
that is,
\[%
\begin{array}
[c]{rcl}%
a_{k} & = & \measuredangle_{A_{k}}\left(  \left[  A_{k},A_{k+1}\right]
,\left[  A_{k},B_{k}\right]  \right) \\
\beta_{k} & = & \measuredangle_{B_{k}}\left(  \left[  A_{k},B_{k}\right]
,\left[  B_{k},B_{k+1}\right]  \right) \\
\gamma_{k} & = & \measuredangle_{B_{k+1}}\left(  \left[  B_{k},B_{k+1}\right]
,\left[  A_{k+},B_{k+1}\right]  \right) \\
\delta_{k} & = & \measuredangle_{A_{k+1}}\left(  \left[  A_{k},A_{k+1}\right]
,\left[  A_{k+1},B_{k+1}\right]  \right)
\end{array}
\]
We have the following relations%

\[%
\begin{array}
[c]{rcl}%
a_{k+1}+\delta_{k}\; \;= & \pi & =\; \; \beta_{k+1}+\gamma_{k}\\
a_{k}+\beta_{k}+\gamma_{k}+\delta_{k} & = & 2\pi,\mathrm{if\ }%
k\mathrm{\ is\ even}\\
a_{k}+\beta_{k}+\gamma_{k}+\delta_{k} & < & 2\pi,\mathrm{if\ }%
k\mathrm{\ is\ odd}%
\end{array}
\]
It follows that for all $k,$%
\begin{equation}
a_{2k-1}+\beta_{2k-1}\lvertneqq a_{2k}+\beta_{2k}=a_{2k+1}+\beta
_{2k+1}\lvertneqq a_{2k+2}+\beta_{2k+2}. \label{doubleincrease}%
\end{equation}
In particular, the sequence
\begin{equation}
\left \{  a_{k}+\beta_{k}\right \}  _{k\in \mathbb{N}}\mathrm{\ is\ increasing.}
\label{singleincreasing}%
\end{equation}
Denote by $A\left(  Q_{2k+1}\right)  $ the area of the quadrilateral
$Q_{2k+1}.$ \newline \textbf{Claim }: the sequence $\left \{  A\left(
Q_{2k+1}\right)  \right \}  $ is bounded below by some $\Lambda>0.$

Assume, on the contrary, that $\lim_{k_{n}\rightarrow \infty}A\left(
Q_{2k_{n}+1}\right)  =0$ for some subsequence $\left \{  A\left(  Q_{2k_{n}%
+1}\right)  \right \}  $ of $\left \{  A\left(  Q_{2k+1}\right)  \right \}
.$\newline Consider the geodesic segment $\left[  A_{2k_{n}+1},B_{2k_{n}%
+2}\right]  $ which splits $Q_{2k_{n}+1}$ into two triangles, say
$T_{2k_{n}+1}$ and $T_{2k_{n}+1}^{\prime}.$ By assumption, the area of
$T_{2k_{n}+1}^{\prime}$ tends to $0$ as $k_{n}\rightarrow \infty.$ By
(\ref{loweboundnew}) the side $\left[  A_{2k_{n}+1},B_{2k_{n}+1}\right]  $ of
$T_{2k_{n}+1}^{\prime}$ is bounded below by $C$ and, by (\ref{2lower}), the
side $\left[  B_{2k_{n}+1},B_{2k_{n}+2}\right]  $ is bounded below by
$C^{\prime}.$ It follows that $\beta_{2k_{n}+1}\rightarrow0.$ In a similar way
we use the geodesic segment $\left[  A_{2k+2},B_{2k+1}\right]  $ to show that
$\alpha_{2k+1}\rightarrow0$. Thus, $\left \{  a_{2k_{n}+1}+\beta_{k_{n}%
+1}\right \}  \rightarrow0,$ contradicting (\ref{singleincreasing}). This
completes the proof of the Claim.

Note that as $M$ has negative curvature bounded away from $0,$ there exists a
constant $\lambda_{M}$ such that
\[
A\left(  Q_{2k+1}\right)  \leq \lambda_{M}\bigl(2\pi-\left(  a_{2k+1}%
+\beta_{2k+1}+\gamma_{2k+1}+\delta_{2k+1}\right)  \bigr)
\]
Combining this inequality with the Claim, we have%
\begin{equation}%
\begin{array}
[c]{rcl}%
2\pi-\left(  a_{2k+1}+\beta_{2k+1}+\gamma_{2k+1}+\delta_{2k+1}\right)  & \geq
& \frac{\Lambda}{\lambda_{M}}\\
a_{2k+1}+\beta_{2k+1}+\gamma_{2k+1}+\delta_{2k+1} & \leq & 2\pi-\frac{\Lambda
}{\lambda_{M}}\\
a_{2k+1}+\beta_{2k+1}+\left(  \pi-\beta_{2k+2}\right)  +\left(  \pi
-a_{2k+2}\right)  & \leq & 2\pi-\frac{\Lambda}{\lambda_{M}}\\
a_{2k+1}+\beta_{2k+1}+\frac{\Lambda}{\lambda_{M}} & \leq & \alpha_{2k+2}%
+\beta_{2k+2}.
\end{array}
\label{manyineq}%
\end{equation}
By (\ref{doubleincrease}) it follows that $a_{k}+\beta_{k}\rightarrow \infty$
as $k\rightarrow \infty,$ a contradiction.

Therefore, the assumption $d\left(  \mathrm{Im}g_{1},\mathrm{Im}g_{2}\right)
=C>0$ (cf (\ref{Cdisatnce})) leads to a contradiction and the result follows
from Lemma \ref{karaneww}.
\end{proof}

We proceed now to show Condition ($\mathrm{D}$). We will need the following 
\begin{lemma}
\label{existence closed geodesic}Let $\varphi$ be a hyperbolic element of
$\Gamma$ and let $\eta=\varphi(-\infty)$ and $\xi=\varphi(\infty)$ be the
repulsive and attractive points of $\varphi$ in $\partial \widetilde{X}.$ Then
any geodesic line $c$ joining $\eta$ and $\xi$ projects to a closed geodesic
in $X.$
\end{lemma}

\begin{proof}
By Proposition 3.3, p. 31 of \cite{[Bal]}, there is an axis $c_{0}$ of
$\varphi$ in $\widetilde{X}$ which projects to a closed geodesic in $X.$ Let
$c$ be a geodesic line of $\widetilde{X}$ joining the points $\eta,$ $\xi.$
Then, by the Flat Strip Theorem, $c$ and $c_{0}$ are parallel in
$\widetilde{X}$ i.e. they bound a flat strip in $\widetilde{X}.$ Therefore,
$c$ is also an axis of $\varphi$ and thus it projects to a closed geodesic in
$S.$
\end{proof}

\begin{proposition}
\label{newdensity}The set
\[
\left \{  \left(  g\left(  +\infty \right)  ,g\left(  -\infty \right)  \right)
\bigm \vert p\left(  g\right)  \mathrm{\ is\ closed\ and\ unique}\right \}
\]
is dense in $\partial^{2}\widetilde{X}.$
\end{proposition}

\begin{proof}
Let $O\times U$ be open in $\partial^{2}\widetilde{X}$ where $O,U$ are
disjoint intervals in $\partial \widetilde{X}.$ By Proposition \ref{dense3},
there exists a hyperbolic $\phi \in \Gamma$ such that
\[
\left(  \phi \left(  +\infty \right)  ,\phi \left(  -\infty \right)  \right)  \in
O\times U.
\]
If the closed geodesic $\beta$ corresponding to the axis $\left(  \phi \left(
+\infty \right)  ,\phi \left(  -\infty \right)  \right)  $ is unique we have
nothing to show. Suppose $\beta$ is not unique. Then it is contained in a flat
strip, hence, by parallel translation we may assume that $\beta$ contains a
conical point $\widetilde{s}$ with cone angle $\theta \left(  \widetilde
{s}\right)  >2\pi.$ The conical point $\widetilde{s}$ splits $\beta$ into two
geodesic rays, denote them by $r_{2}$ and $r_{3},$ which form an angle $=\pi$
inside the flat strip bounded by $\beta$ and the other angle being equal to
$\theta \left(  \widetilde{s}\right)  -\pi.$ Clearly, $r_{2}\left(
+\infty \right)  =\beta \left(  +\infty \right)  \in O$ and $r_{3}\left(
+\infty \right)  =\beta \left(  -\infty \right)  \in U.$ Let $r_{1},r_{4}$ be
geodesic rays with $r_{1}\left(  0\right)  =r_{4}\left(  0\right)
=\widetilde{s}$ such that

\begin{itemize}
\item $r_{1}\left(  +\infty \right)  \in O\setminus \left \{  \beta \left(
+\infty \right)  \right \}  $ and $r_{4}\left(  +\infty \right)  \in
U\setminus \left \{  \beta \left(  -\infty \right)  \right \}  ,$

\item $r_{1}$ and $r_{4}$ do not intersect the interior of the flat strip
bounded by $\beta$ and

\item $\sphericalangle_{\widetilde{s}}\left(  r_{4},r_{1}\right)  >\pi.$
\end{itemize}

Pick a number $\theta_{0}$ satisfying $0<\theta_{0}<\frac{\theta \left(
\widetilde{s}\right)  -2\pi}{2}.$ If $\sphericalangle_{\widetilde{s}}\left(
r_{1},r_{2}\right)  >\theta_{0}$ we may replace $r_{1}$ by a geodesic ray
$r_{1}^{\prime}$ emanating form $\widetilde{s}$ satisfying the above three
properties and such that, in addition, $\sphericalangle_{\widetilde{s}}\left(
r_{1}^{\prime},r_{2}\right)  <\theta_{0}.$ We similarly replace, if necessary,
$r_{4.}$ Hence, we may assume that the (clockwise) angles formed by $r_{i}$ at
$\widetilde{s}$ satisfy the following relations%
\begin{equation}%
\begin{array}
[c]{r}%
0\leq \sphericalangle_{\widetilde{s}}\left(  r_{1},r_{2}\right)  <\theta_{0},\\
\sphericalangle_{\widetilde{s}}\left(  r_{2},r_{3}\right)  =\pi,\\
0\leq \sphericalangle_{\widetilde{s}}\left(  r_{3},r_{4}\right)  <\theta_{0},\\
\mathrm{\ }\sphericalangle_{\widetilde{s}}\left(  r_{4},r_{1}\right)  >\pi.
\end{array}
\label{angle_inequalities}%
\end{equation}
Observe that equality in any of the above inequalities holds if and only if
the images of the corresponding geodesics rays have a geodesic segment in
common. Let $\left(  r_{1}\left(  +\infty \right)  ,r_{2}\left(  +\infty
\right)  \right)  $ be the (open) interval on the boundary $\partial
\widetilde{X}$ between these two points contained in $O$ and $\left(
r_{3}\left(  +\infty \right)  ,r_{4}\left(  +\infty \right)  \right)  $ the
corresponding interval in $U.$ Clearly, these intervals are disjoint. By
Proposition \ref{dense3} and Lemma \ref{existence closed geodesic} there exist
boundary points
\begin{equation}
\eta \in \left(  r_{1}\left(  +\infty \right)  ,r_{2}\left(  +\infty \right)
\right)  ,\zeta \in \left(  r_{3}\left(  +\infty \right)  ,r_{4}\left(
+\infty \right)  \right)  . \label{ginfinity}%
\end{equation}
such that $\eta=g\left(  +\infty \right)  ,\zeta=g\left(  -\infty \right)  $ for
some closed geodesic $g$ in $G\widetilde{X}.$ Clearly, $\left(  g\left(
+\infty \right)  ,g\left(  -\infty \right)  \right)  \in O\times U$ and we show
that $g$ is a unique (closed) geodesic.

We first show that $\mathrm{Im}g$ must contain $\widetilde{s}.$ $\widetilde
{X}$ is homeomorphic to an open disk and the images of the geodesic rays
$r_{i},i=1,2,3,4$ split $\widetilde{X}$ into four open convex regions denoted
by $P_{12},P_{23},P_{34},P_{41}$ bounded by
\[
\mathrm{Im}r_{1}\cup \mathrm{Im}r_{2}\setminus \mathrm{Im}r_{1}\cap
\mathrm{Im}r_{2},\mathrm{Im}r_{2}\cup \mathrm{Im}r_{3},\mathrm{Im}r_{3}%
\cup \mathrm{Im}r_{4}\setminus \mathrm{Im}r_{3}\cap \mathrm{Im}r_{4}%
,\mathrm{Im}r_{4}\cup \mathrm{Im}r_{1}%
\]
respectively. By its definition (cf equation \ref{ginfinity} above)
$\mathrm{Im}g$ intersects $P_{12}$ and $P_{34}.$ Assume on the contrary that
$\widetilde{s}\notin \mathrm{Im}g.$ Then $\mathrm{Im}g$ must intersect either
$P_{23}$ or, $P_{41}.$ If $\mathrm{Im}g$ intersects $P_{23}$ then
$\mathrm{Im}g$ intersects the boundary lines $r_{2}$ and $r_{3}$ which are
sub-rays of $\beta.$ This contradicts uniqueness of geodesic segments. Assume
now that $\mathrm{Im}g$ intersects $P_{41}.$ Then, $\mathrm{Im}g$ intersects
the boundary lines $r_{1}$ and $r_{4},$ that is, there exists $x=r_{1}\left(
t_{x}\right)  \in \mathrm{Im}g\cap \mathrm{Im}r_{1}$ and $y=r_{4}\left(
t_{y}\right)  \in \mathrm{Im}g\cap \mathrm{Im}r_{4}.$ Since $\sphericalangle
_{\widetilde{s}}\left(  r_{4},r_{1}\right)  >\pi$ (cf \ref{angle_inequalities}
above), $r_{1}|_{\left[  0,t_{x}\right]  }\cup r_{4}|_{\left[  0,t_{y}\right]
}$ is a geodesic segment containing $\widetilde{s}$ with endpoints $x,y.$ As
$x,y\in \mathrm{Im}g$ and $\widetilde{s}\notin \mathrm{Im}g$ the geodesic $g$
provides a geodesic segment with endpoints $x,y$ distinct from $r_{2}%
|_{\left[  0,t_{x}\right]  }\cup r_{3}|_{\left[  0,t_{y}\right]  }.$ This also
contradicts uniqueness of geodesic segments.

To see that $g$ is unique, let $g^{\prime}$ be a (closed) geodesic with
$p\left(  g^{\prime}\right)  $ freely homotopic to $p\left(  g\right)  .$ Then
$g^{\prime}\left(  -\infty \right)  =\eta=g\left(  -\infty \right)  $ and
$g^{\prime}\left(  +\infty \right)  =\zeta=g\left(  +\infty \right)  .$ By the
above argument $\mathrm{Im}g^{\prime}$ also contains $\widetilde{s}$ and by
uniqueness of geodesic rays $g$ and $g^{\prime}$ coincide.
\end{proof}

It is plausible to believe that the techniques used in this section to prove
that the CAT(0) surface $X$ satisfies the assumptions ($\Delta$), (U), (C) and
(D) can be applied for the class of multipolyhedra of piecewise constant
curvature $\chi \leq0 $ (See \cite[\S 3.2]{[Pau]}).\\[3mm]
{\bf Acknowledgments} The authors would like to thank the anonymous referee 
for very helpful comments and suggestions which significantly improved 
this paper.

\end{document}